\newtheorem {Theorem} {Theorem}
\numberwithin{Theorem}{section}
\newtheorem {Proposition}{Proposition}[section]
\theoremstyle{definition}
\newtheorem{Definition}{Definition}[section]
\theoremstyle{remark}
\newtheorem{Remark}{Remark}[section]
\newtheorem{Example}{Example}[section]
\newtheorem {Corollary}{Corollary}
\def    \_      {_{_}}
\def    \C      {\mathds{C}}
\def    \Z      {\mathds{Z}}
\def    \N      {\mathds{N}}
\def    \K      {\mathcal{K}}
\def    \LL     {{\mathfrak L}}
\def    \K      {{\mathfrak K}}
\def    \sst     {\scriptscriptstyle}
\def    \punt   {{\boldsymbol{.}}}
\def    \invumb {{\scriptscriptstyle  \langle-1\rangle}}
\newcommand{\dsum}{\displaystyle\sum}  
\newcommand{\Binom}{\displaystyle\binom} 
\renewcommand{\_}[1]{_{_#1}}
\newcommand{\der}[1]{#1_{_\mathcal{D}}}
\newcommand{\inv}{{^{\sst{-1}}}}
\newcommand{\singleton}{\chi}
\newcommand{\unity}{\upsilon}
\newcommand{\augmentation}{\varepsilon}
\newcommand{\bell}{\beta}
\newcommand{\bernoulli}{\iota}
\newcommand{\equal}{\equiv}
\newcommand{\entries}{\mathfrak{r}}
\newcommand{\entriess}{\mathfrak{s}}
\newcommand{\Rio}{\mathfrak{R}\mathfrak{i}\mathfrak{o}}
\newcommand{\Sheff}{\mathfrak{Sheff}}
\newcommand{\Pascal}{\boldsymbol{P}}
\newcommand{\StirlingI}{\boldsymbol{s}}
\newcommand{\StirlingII}{\boldsymbol{S}}
\title[A symbolic treatment of Riordan arrays]
{A symbolic treatment of Riordan arrays}
\author[J.\ Agapito]{Jos\'e Agapito}
\author[\^A.\ Mestre]{\^Angela Mestre}
\author[P.\ Petrullo]{Pasquale Petrullo}
\author[M.M.\ Torres]{Maria M. Torres}
\address{Centro de Estruturas Lineares e Combinat\'orias, Universidade de Lisboa, Portugal}
\email{jagruiz@cii.fc.ul.pt}
\email{mestre@cii.fc.ul.pt}
\email{mmtorres@fc.ul.pt}
\address{Universit\'a degli studi della Basilicata, Potenza, Italy}
\email{p.petrullo@gmail.com}
\thanks{2010 \emph{Mathematics Subject Classification.}
Primary 05A40, 05A15, 05A19, 11B83}
\keywords{Riordan arrays, umbral calculus, Abel's identity}
\thanks{This work was done within the activities of the Centro de Estruturas Lineares e Combinat\'orias (University of Lisbon, Portugal) and the Dipartimento di Matematica e Informatica (Universit\`a degli Studi della Basilicata, Italy). The first author was partially supported by the Portuguese Science and Technology Foundation (FCT) through the program Ci\^encia 2008 and grant PTDC/MAT/099880/2008; the second author was supported by the FCT grant SFRH/BPD/ 48223/2008 and the fourth author was partially supported by the FCT and FEDER/POCI 2010.}
\begin{document}

\begin{abstract}
We approach Riordan arrays and their generalizations via umbral symbolic methods. This new approach allows us to derive fundamental aspects of the theory of Riordan arrays as immediate consequences of the umbral version of the classical Abel's identity for polynomials. In particular, we obtain a novel non-recursive formula for Riordan arrays and derive, from this new formula, some known recurrences and a new recurrence relation for Riordan arrays.
\vspace{-1cm}
\end{abstract}

\maketitle


\section{Introduction}
\label{se:intro}

A Riordan array is an infinite lower triangular matrix $R=(r_{n,k})_{n,k\in\N}$ \emph{ordinarily}  described by a pair of generating functions $(d(t),h(t))$ such that $d(0)\neq 0$, $h(0)=0$,  $h'(0)\neq 0$, and $r_{n,k}=\left[t^n\right]d(t)\,h(t)^k$, where $[t^n]$ is the operator which gives the $n^{\rm{th}}$ coefficient in the series development of a generating function. If $d(t)=\sum_{n\geq 0}d_n\frac{t^n}{n!}$, $h(t)=\sum_{n\geq 1}h_n\frac{t^n}{n!}$, and $r_{n,k}=n!\left[t^n\right]d(t)\frac{h(t)^k}{k!}$, then $R$ is called \emph{exponential} Riordan array. More generally, given a sequence $(c_n)$ of nonzero numbers and taking $d(t)=\sum_{n\geq 0}\,d_n\frac{t^n}{c_n}$, $h(t)=\sum_{n\geq 1}\,h_n\frac{t^n}{c_n}$, and $r_{n,k}=c_n\left[t^n\right]d(t)\frac{h(t)^k}{c_k}$, $R$ is called a \emph{generalized} Riordan array with respect to $(c_n)$ \cite{WW}. Thus, for example, the Riordan array whose entries are the binomial numbers $\binom{n}{k}$ can be described by the pair $\big(\frac{1}{1-t},\frac{t}{1-t}\big)$ ($c_n=1$: ordinary presentation) or by $(e^t,t)$ ($c_n=n!$: exponential presentation). Riordan arrays form a group under matrix multiplication. The literature about Riordan arrays is vast and still growing and the applications cover a wide range of subjects, such as enumerative combinatorics, combinatorial sums, recurrence relations and computer science, among other topics \cite{Chen,LMMS:identities,LM:recurrence,M08,MRSV,MS10,MS11,sha:bijection,ShGeWoWo:Rgroup,Sprugnoli:Rioarray-combsum,Sprugnoli:bib}. Formally, ordinary Riordan arrays are a formulation of the $1$-umbral calculus, whereas exponential Riordan arrays are a formulation of the $n!$-umbral calculus. In fact, the classical umbral calculus, as given in \cite{Roman:umbral}, consists of a systematic study of a certain class of univariate polynomial families (\emph{Sheffer sequences}) by employing linear operators on polynomials. An extension of the classical umbral calculus to infinitely many variables can be found in \cite{BBN83}. Sheffer sequences, provided with a particular non-commutative multiplication (\emph{umbral composition}), form a group which is isomorphic to the Riordan group \cite{HHS}.

The purpose of this paper is to give a promising new symbolic treatment of Riordan arrays, based on a renewed approach to umbral calculus initiated by Rota and Taylor in  \cite{RT:classicalumbral}. This renewed approach makes no use of operator theory. The symbolic techniques developed from this new approach have been fruitfully applied to a wide range of topics \cite{diNNS:Sheffer,diNPS:cumulants-convolutions,diNS:poisson,diNS:cumulant-factorial,Ges,diNGS:bernoulli,Petrullo:abelpolynomials,Petrullo:thesis,Petrullo:abelidentity,
PetSen:puma,RS,RST:abelpolynomials,stanley:eul,Tay}. In particular, Di Nardo, Niederhausen and Senato \cite{diNNS:Sheffer} gave a representation of exponential Riordan arrays as a result of their symbolic handling of Sheffer polynomials. Definition \ref{de:riordanarray} given later on this paper is a normalized version of their formulation. A treatment of Riordan arrays under the more general context of \emph{recursive} matrices using the more established approach to umbral calculus \cite{Roman:umbral} can be found in \cite{BBN82,BM}.

A key tool in our symbolic approach to Riordan arrays is the umbral version (formula \eqref{eq:abel-identity}) of the following binomial Abel identity for polynomials 
\begin{equation}\label{eq:Abelidentity}
(x+y)^n=\dsum_{k=0}^n\binom{n}{k}(y+ka)^{n-k} x(x-ka)^{k-1}\,.
\end{equation}
The umbral analogue of \eqref{eq:Abelidentity} was first stated in \cite{diNS:poisson} (without a proof) and later obtained from an umbral version of the Lagrange inversion formula in \cite{diNNS:Sheffer}. An elementary proof of the umbral version of Abel's identity has been recently given by Petrullo \cite{Petrullo:thesis,Petrullo:abelidentity}.

The main contribution of this paper is to show that well-known properties of all Riordan arrays are easily derived from the umbral version of Abel's identity. This is a nontrivial aspect from a theoretical point of view. It is not immediate by means of the traditional methods for dealing with Riordan arrays. It is worth mentioning that, in the classical context of generating functions,  Sprugnoli \cite{Sprugnoli:Riordanarrays-AbelGould} had already used Riordan arrays to obtain several combinatorial formulas that generalize the classical Abel's identity. Therefore, our point of view is conceptually different from Sprugnoli's approach. 

The paper is organized as follows. Section \ref{se:prelim} recalls the basics of Rota and Taylor's classical umbral calculus \cite{RT:classicalumbral}. Along the way, two equivalent umbral versions of Abel's identity for polynomials and an umbral version of the classical Lagrange inversion formula are given. Section \ref{se:riordan} states the fundamental aspects of the theory of Riordan arrays (of exponential type) in umbral terms. Also, connections with Sheffer sequences are recast in the new umbral symbolic  setting. Furthermore, some recurrence relations of Riordan arrays are stated. Section \ref{se:examples} tests our umbral approach to Riordan arrays on some concrete classical examples, providing both umbral and traditional formulas. Section \ref{se:genrio} extends the discussion given in Section \ref{se:riordan} to $\omega$-Riordan arrays, where $\omega$ is any umbra with nonzero moments. In this generalized context, we obtain an important umbral formula (Theorem \ref{th:mother-rec}) as  a direct consequence of the umbral Abel identity \eqref{eq:abel-identity}.  One of our main results is a novel non-recursive formula (Theorem \ref{th:nonrec}), which is a direct application of Theorem \ref{th:mother-rec}. Known recurrence relations and a new recurrence formula for $\omega$-Riordan arrays (Theorems \ref{th:rechor}, \ref{th:recver} and \ref{th:recdiff}) are derived from Theorem \ref{th:nonrec}.
\section{Preliminaries}
\label{se:prelim}
This section starts with a brief review of the essentials of the classical umbral calculus that underlies much of this paper. We recall the intimate relation that exists between the umbral version of Abel's identity \eqref{eq:Abelidentity} and the Lagrange inversion formula. This connection is not clear at first sight from the traditional point of view using generating functions (see for instance \cite{Comtet} for the common view of these two classical formulas). The umbral versions of the binomial Abel identity and of the Lagrange inversion formula play an important role in our symbolic approach to Riordan arrays (see Sections~\ref{se:riordan} and ~\ref{se:genrio}).

\subsection{The basics of classical umbral calculus} 

To perform classical umbral calculus we need to have:
\begin{enumerate}[itemsep=1ex,leftmargin=0.8cm]
\item[(i)] A commutative integral domain $R$ with identity $1$ (assume  $R=\C[x]$, $x$ possibly multivariate).
\item[(ii)] An \emph{alphabet} $A=\{\alpha,\beta,\gamma,\ldots\}$ with \emph{enough} letters called \emph{umbrae}.
\item[(iii)] An $R$-linear functional $E\colon R[A]\to R$ such that $E[1]=1$ and $E[\alpha^i\bell^j\cdots\gamma^k]=E[\alpha^i]E[\bell^j]\cdots E[\gamma^k]$, for pairwise distinct umbrae $\alpha,\bell,\ldots,\gamma$  (\emph{uncorrelation} property). The map $E$ is called \emph{evaluation}.
\end{enumerate}

\noindent $R[A]$ simply denotes the polynomial ring in the set $A$ of commuting indeterminates, with coefficients in $R$, endowed with formal addition and multiplication. An element $p\in R[A]$ is called \emph{umbral polynomial}. The \emph{support} of an umbral polynomial $p\in R[A]$ is defined as the set of all umbrae that show up as variables of $p$. Two umbral polynomials $p,q\in R[A]$ are said to be \emph{uncorrelated} if their supports are disjoint. In particular, distinct umbrae are always uncorrelated. Ordinary equality defines a trivial equivalence relation on $R[A]$. Additionally,  two nontrivial equivalence relations can be defined on $R[A]$. We write $p\simeq q$ and say that $p$ and $q$ are umbrally \emph{equivalent}, when $E[p]=E[q]$. Moreover, if $p^n\simeq q^n$ for all integers $n\ge 0$, we say that $p$ and $q$ are \emph{similar} and write $p\equiv q$. An umbral polynomial $p$ is said to represent a sequence $(a_n)$ in $R$ (with $a_0=1$) if $E[p^n] = a_n$ for all $n\ge 0$. In this case we say that $a_n$ is the $n^{\rm{th}}$ \emph{moment} of $p$ and write $p^n\simeq a_n$. The equivalent relation $p\equal q$ means that $p$ and $q$ represent the same sequence. Often, it is convenient to assume that each monic sequence $(a_n)$ in $R$ is represented by infinitely many distinct similar umbrae. In this case we deal with a \emph{saturated umbral calculus} or, equivalently, with a saturated alphabet \cite{RT:classicalumbral}. We can then rephrase property (ii) in a more precise manner by saying that $A$ is saturated.
\begin{Remark}\label{re:3e-r} Although two equal polynomials are similar and two similar polynomials are umbrally equivalent, neither converse holds. For instance, consider umbrae $\alpha, \alpha', \gamma, \gamma'$ such that $\alpha\neq\alpha'$, $\alpha\equal\alpha'$ and $\gamma\neq\gamma'$, $\gamma\equiv\gamma'$. Clearly $\alpha+\gamma\neq \alpha'+\gamma'$, but  $\alpha+\gamma\simeq\alpha'+\gamma'$. Moreover, we have $\alpha+\gamma\equal\alpha'+\gamma'$. Similarly, note that $\alpha+\alpha'\simeq 2\alpha$, whereas $\alpha+\alpha'\not\equal 2\alpha$. These examples show that identities in $R[A]$ involving umbral equivalence and similarity encode more information than ordinary equality.
\end{Remark}
Given $p\in R[A]$, we denote by $e^{p t} $ the formal exponential series
\begin{equation*}\label{eq:gf-umbra}
e^{p t}:=1+\dsum_{n=1}^\infty
p^n\,\frac{t^n}{n!}\,\,\in\,\,R[A]\,[[t]]\,.
\end{equation*}
The series above is called the \emph{generating function} (g.f. for short) of $p$. We extend the action of $E$ from $R[A]$ to $R[A][[t]]$ and write $f_p(t)=E[e^{p t}]:=1+\sum_{n=1}^\infty E[p^n]\,\frac{t^n}{n!}\in R[[t]]$. The series $f_p(t)$ is called the \emph{moment generating function} (m.g.f. for short) of $p$. As done for umbral polynomials, we may also define umbral equivalence on $R[A][[t]]$ and write $e^{p t}\simeq e^{q t}$ if and only if $E[e^{p t}]\simeq E[e^{q t}]$, so that $e^{p t}\simeq f_p(t)$. It is not difficult to see that $f_p(t)=f_q(t)$ if and only if $p\equiv q$. We will often use the terms m.g.f. and g.f without distinction throughout the paper. For later reference, we summarize in Table~\ref{tb:keyumbrae} useful information on some fundamental  umbrae.\par

\smallskip

\begin{table}[h]
  \centering
  \begin{tabular}{lccccc}
               \toprule
               \textbf{Umbra} &  $\bm{\alpha}$ & & $\bm{f_\alpha(t)=E[e^{\alpha t}]}$ & & $\bm{E[\alpha^n]=n![t^n]f_\alpha(t)}$ \\
               \midrule
               Augmentation & $\augmentation$ & & $1$ & & $1,0,0,\ldots$ \\[1ex]
               Unity & $\unity$ & & $e^{\,t}$ & & $1,1,1,\ldots$ \\[1ex]
               Singleton & $\singleton$ & & $1+\,t$ & & $1,1,0,\ldots$ \\[1ex]
               Bell & $\bell$ & & $e^{e^{\,t}-1}$ & & $1,1,B_2,B_3,\ldots$ (Bell numbers) \\[1ex]
               Bernoulli & $\iota$ & & $\dfrac{t}{e^{\,t}-1}$ & & $1,b_1,b_2,\ldots$ (Bernoulli numbers) \\[1ex]
               Boolean unity & $\bar{\unity}$ & & $\frac{1}{1-t}$ & & $0!,1!,2!,3!,\ldots$ \\
               \bottomrule
\end{tabular}
  \caption{Some fundamental umbrae.}\label{tb:keyumbrae}
\vspace{-0.5cm}
\end{table}

\noindent On the one hand, note that $f_{\alpha+\gamma}(t)=E[e^{(\alpha+\gamma)t}]=E[e^{\alpha t} e^{\gamma t}]=E[e^{\alpha t}]E[e^{\gamma t}]=f_\alpha(t)\,f_\gamma(t)$ if $\alpha\neq\gamma$ (uncorrelation property). On the other hand, clearly $f_{2\alpha}(t)=E[e^{(2\alpha)t}]=E[e^{\alpha(2t)}]=f_\alpha(2t)\neq (f_\alpha(t))^2$. However, if $\alpha\neq\alpha'$ and $\alpha\equiv\alpha'$, then  $f_{\alpha+\alpha'}(t)=(f_\alpha(t))^2$. The inequality $f_{\alpha+\alpha'}(t)\neq f_{2\alpha}(t)$ reflects the fact that the umbral polynomials $\alpha+\alpha'$ and $2\alpha$ are not similar, as was pointed out in Remark~\ref{re:3e-r}. We stress this difference and set  $2\punt\alpha\equal\alpha+\alpha'$ (with $\alpha\neq\alpha'$ and $\alpha\equal\alpha'$). Thus, for all $n\ge 0$, we have
\begin{equation*}
(2\punt\alpha)^n\simeq(\alpha+\alpha')^n\simeq\dsum_{k=0}^n\binom{n}{k}\alpha^k\,
(\alpha')^{n-k}\simeq\dsum_{k=0}^n\binom{n}{k}a_k\,a_{n-k}.
\end{equation*}
\noindent Up to similarity, the auxiliary umbra $2\punt\alpha$ is well defined. In general, given $x\in R\cup A$, we shall implicitly define the auxiliary umbra $x\punt\alpha$ by means of its g.f.; $e^{(x\punt\alpha)t}\simeq (f_\alpha(t))^x $; that is, $(x\punt\alpha)^n\simeq n![t^n](f_\alpha(t))^x$. In particular, for any $\gamma, \alpha\in A$, we have $e^{(\gamma\punt\alpha)t}\simeq (f_\alpha(t))^\gamma\simeq e^{\gamma(\log f_\alpha(t))}$; namely, $f_{\gamma\punt\alpha}(t)=f_\gamma\big(\log f_\alpha(t)\big)$. 

\begin{Remark}\label{re:momentsofdotproduct}
Let $\alpha^n\simeq a_n$ for all $n\in\N$. The moments of $\gamma\punt\alpha$ are explicitly given by the formula
\begin{equation*}
(\gamma\punt\alpha)^n\simeq\dsum_{i=0}^n (\gamma)_i B_{n,i}(a_1,a_2,\ldots,a_{n-i+1})\,,
\end{equation*}
where $(\gamma)_i=\gamma(\gamma-1)\cdots(\gamma-i+1)$ and the $B_{n,i}(a_1,a_2,\ldots,a_{n-i+1})$ are the partial Bell exponential polynomials (see \cite{diNS:poisson} for details). Note that $\gamma\punt\alpha\simeq\gamma\,\alpha$ but $\gamma\punt\alpha\not\equal\gamma\,\alpha$.
\end{Remark}

The dot product of umbrae is in general non-commutative. Nevertheless, it satisfies several useful identities which can be easily verified via generating functions.

\begin{Example}
Given $\alpha\in A$, $r\in R$ and the umbrae $\augmentation$, $\bell$, $\unity$ and $\singleton$ whose g.f.'s are shown in Table~\ref{tb:keyumbrae}, we have
\begin{equation*}
\alpha\punt\augmentation\equiv\augmentation\equiv\augmentation\punt\alpha\,\,,\,\,\singleton\punt\bell\equiv\bell\punt\singleton\equiv\unity\,\,,\,\, r\alpha\equiv\alpha\punt(r\unity)\,\,,\,\,
\alpha+\augmentation\equal\alpha\equal\augmentation+\alpha\,\,,\,\, \alpha+(-1)\punt\alpha\equal\augmentation\equal(-1)\punt\alpha+\alpha\,.
\end{equation*}
For instance, $f_{\augmentation\punt\alpha}(t)=f_\augmentation(\log f_\alpha(t))=1$ and $f_{\alpha\punt\augmentation}(t)=f_\alpha(\log f_\augmentation(t))=f_\alpha(0)=1$; hence $\alpha\punt\augmentation\equiv\augmentation\equiv\augmentation\punt\alpha$. Similarly, $f_{\singleton\punt\bell}(t)=f_\singleton(\log f_\bell(t))=f_\singleton(\log e^{e^t-1})=1+e^t-1=e^t$ and $f_{\bell\punt\singleton}(t)=f_\bell(\log f_\singleton(t))=f_\bell(\log(1+t))=e^{e^{\log(1+t)}-1}=e^{1+t-1}=e^t$; hence $\singleton\punt\bell\equiv\bell\punt\singleton\equiv\unity$.
\end{Example}

By iterating the dot product we can define further auxiliary umbrae. In this way, the umbra $\gamma\punt\bell\punt\alpha$ (where $\bell$ is the Bell umbra) is called the \emph{composition} umbra of $\gamma$ with $\alpha$ because it encodes the composition of formal series; namely, $ f_{\gamma\punt\bell\punt\alpha}(t)=f_\gamma\big(f_\alpha(t)-1\big)$.
We can also define new umbrae out of a single umbra. For example, $\alpha^\invumb$ denotes the  auxiliary umbra that satisfies $\alpha^\invumb\punt\bell\punt\alpha\equiv\singleton\equiv\alpha\punt\bell\punt\alpha^\invumb$. We call $\alpha^\invumb$ the compositional inverse of $\alpha$. One can check that $f_{\alpha^\invumb}(t)-1=(f_\alpha(t)-1)^\invumb$. Likewise, we define the $\alpha$-\textit{derivative} umbra $\der{\alpha}$ as the auxiliary umbra whose moments are given by $\der{\alpha}^n\simeq n\alpha^{n-1}$ for all $n\ge 1$. It is easy to see that the m.g.f. of $\der{\alpha}$ satisfies $f_{\der{\alpha}}(t)=1+t\,f_\alpha(t)$. Since $(t\,f_{\alpha}(t))^m\simeq t^m\,e^{(m\punt\alpha)t}$ for any integer $m\ge 0$, we have 
$
e^{(\sigma\punt\bell\punt\der{\alpha})t}\simeq e^{\sigma[tf_\alpha(t)]}\simeq \sum_{m=0}^\infty\sum_{k=0}^\infty\sigma^m\,(m\punt\alpha)^k\,\frac{t^m}{m!}\frac{t^k}{k!}
$. 
Comparing the coefficients of $\frac{t^n}{n!}$, we obtain the following binomial-like expansion for the moments of a composition umbra,
\begin{equation}\label{eq:comp-mom}
(\sigma\punt\bell\punt\der{\alpha})^n\simeq\sum_{k=0}^n\binom{n}{k}\sigma^k(k\punt\alpha)^{n-k}\,.
\end{equation}
\begin{Remark} Let $f_\sigma(t)$ be the m.g.f. of $\sigma$. Formula \eqref{eq:comp-mom} is equivalent to
\begin{equation}\label{eq:comp-mom-classic}
[t^n]f_\sigma(tf_\alpha(t)) = \sum_{k=0}^n [t^k] f_\sigma(t) \, [t^{n-k}] (f_\alpha(t))^k \,. 
\end{equation}
\end{Remark}

\subsection{Umbral Abel identity and the Lagrange inversion formula}
\label{sse:umbral-abelpoly}

Let $\alpha, \gamma$ and $\sigma$ be any umbrae. For all $n\ge 1$, we have
\begin{equation}\label{eq:abel-identity}
(\gamma+\sigma)^n\simeq\dsum_{k=0}^n\binom{n}{k}(\gamma+k\punt\alpha)^{n-k} \sigma\big(\sigma+(-k)\punt\alpha\big)^{k-1}\,\,\,\,\qquad\text{(Umbral Abel Identity, Version I)}.
\end{equation}
Formula~\eqref{eq:abel-identity} is a straightforward (though nontrivial) generalization of the classical Abel identity for polynomials in three indeterminates shown in \eqref{eq:Abelidentity}. An elementary proof and further applications of \eqref{eq:abel-identity} have been recently given by Petrullo \cite{Petrullo:abelidentity}. The umbral Abel polynomials $\sigma\big(\sigma+(-n)\punt\alpha\big)^{n-1}$ are direct generalizations of the classical Abel polynomials $x(x-na)^{n-1}$ for $a\in R$. Such polynomials were first studied by Rota, Shen and Taylor \cite{RST:abelpolynomials} in connection with polynomials of binomial type. For the sake of clarity, our notation for umbral Abel polynomials slightly differs from the usual convention: $\sigma\big(\sigma-n\punt\alpha\big)^{n-1}$. Umbral Abel polynomials have been applied to cumulants and convolutions in probability theory by Di Nardo, Petrullo and Senato~\cite{diNPS:cumulants-convolutions}, and further generalized by Petrullo in \cite{Petrullo:abelpolynomials,Petrullo:thesis}. For convenience, let us now denote by $\K_{\sigma,\alpha}$ an auxiliary umbra whose moments are given by
\begin{equation}\label{eq:K}
\K_{\sigma,\alpha}^n\simeq \sigma\big(\sigma+(-n)\punt\alpha\big)^{n-1}\quad,\quad n\geq 1.
\end{equation}
A simple proof of the Lagrange inversion formula immediately follows.
\begin{Theorem}[Lagrange inversion formula] \label{th:lagrange} For any umbrae $\alpha, \gamma$ and all integers $n\ge 1$, we have
\begin{equation}\label{eq:lagrange}
\sigma\big(\sigma+(-n)\punt\alpha\big)^{n-1}\simeq(\sigma\punt\beta\punt\der{\alpha}^\invumb)^n\,.
\end{equation}
\end{Theorem}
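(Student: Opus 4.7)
The plan is to obtain \eqref{eq:lagrange} as a direct consequence of the umbral Abel identity \eqref{eq:abel-identity} combined with the composition-umbra expansion \eqref{eq:comp-mom}. The key observation is that the Abel identity specializes beautifully when $\gamma$ is chosen to be the augmentation umbra $\augmentation$, which then matches the binomial-like expansion for moments of a composition umbra term by term.

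First I would set $\gamma \equal \augmentation$ in \eqref{eq:abel-identity}. Since $\augmentation + \mu \equal \mu$ for any umbra $\mu$, the left-hand side becomes $\sigma^n$ and the factor $(\augmentation + k\punt\alpha)^{n-k}$ collapses to $(k\punt\alpha)^{n-k}$. Using the auxiliary umbra $\K_{\sigma,\alpha}$ from \eqref{eq:K}, this yields
\begin{equation*}
\sigma^n \simeq \dsum_{k=0}^n \binom{n}{k} (k\punt\alpha)^{n-k} \, \K_{\sigma,\alpha}^k.
\end{equation*}
Next I would apply \eqref{eq:comp-mom} with $\sigma$ replaced by $\K_{\sigma,\alpha}$, giving
\begin{equation*}
(\K_{\sigma,\alpha} \punt \bell \punt \der{\alpha})^n \simeq \dsum_{k=0}^n \binom{n}{k} \K_{\sigma,\alpha}^k \, (k\punt\alpha)^{n-k}.
\end{equation*}
Because $\K_{\sigma,\alpha}$ and the (fresh copies of the) $\alpha$-block are uncorrelated, the two right-hand sides agree term by term, so $(\K_{\sigma,\alpha} \punt \bell \punt \der{\alpha})^n \simeq \sigma^n$ for every $n \geq 0$; that is,
\begin{equation*}
\K_{\sigma,\alpha} \punt \bell \punt \der{\alpha} \equiv \sigma.
\end{equation*}

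To finish, I would translate this similarity into an explicit formula for $\K_{\sigma,\alpha}$ by means of generating functions. From $f_{\K_{\sigma,\alpha}}\bigl(f_{\der{\alpha}}(t) - 1\bigr) = f_\sigma(t)$ and the defining property $f_{\der{\alpha}^\invumb}(t) - 1 = \bigl(f_{\der{\alpha}}(t) - 1\bigr)^\invumb$ of the compositional inverse, substituting $t \mapsto f_{\der{\alpha}^\invumb}(s) - 1$ gives
\begin{equation*}
f_{\K_{\sigma,\alpha}}(s) = f_\sigma\!\bigl(f_{\der{\alpha}^\invumb}(s) - 1\bigr) = f_{\sigma \punt \bell \punt \der{\alpha}^\invumb}(s),
\end{equation*}
whence $\K_{\sigma,\alpha} \equiv \sigma \punt \bell \punt \der{\alpha}^\invumb$. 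Unwinding the definition of $\K_{\sigma,\alpha}$ in \eqref{eq:K} then yields \eqref{eq:lagrange}.

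The only delicate point is the passage from the similarity $\K_{\sigma,\alpha} \punt \bell \punt \der{\alpha} \equiv \sigma$ to the closed form for $\K_{\sigma,\alpha}$: one must invoke that $\der{\alpha}^\invumb$ really is the compositional inverse of $\der{\alpha}$ in the sense of formal series, which is transparent at the level of moment generating functions but somewhat subtler if done purely at the umbral level via associativity of $\punt$ together with $\der{\alpha} \punt \bell \punt \der{\alpha}^\invumb \equiv \singleton$ and $\bell \punt \singleton \equiv \unity$. Everything else reduces to matching coefficients in two binomial-type expansions.
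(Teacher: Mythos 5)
Your proposal is correct and follows essentially the same route as the paper's proof: both combine the expansion \eqref{eq:comp-mom} applied to $\K_{\sst\sigma,\alpha}\punt\beta\punt\der{\alpha}$ with the Abel identity \eqref{eq:abel-identity} specialized at $\gamma\equal\augmentation$ to conclude $\K_{\sst\sigma,\alpha}\punt\beta\punt\der{\alpha}\equiv\sigma$, and then cancel $\der{\alpha}$ via its compositional inverse. The only difference is presentational (you match the two expansions in the middle and justify the final cancellation explicitly via m.g.f.'s, whereas the paper writes the chain in one line), so no changes are needed.
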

\begin{proof} By applying \eqref{eq:comp-mom}, then \eqref{eq:K}, and finally \eqref{eq:abel-identity}, we get
\begin{eqnarray*}(\K_{\sst\sigma,\alpha}\punt\beta\punt\der{\alpha})^n  \simeq\sum_{k=0}^n\binom{n}{k}\sigma\big(\sigma+(-k)\punt\alpha\big)^{k-1}(k\punt\alpha)^{n-k}\simeq
(\augmentation+\sigma)^n\simeq\sigma^n.
\end{eqnarray*}
Therefore, $\K_{\sst\sigma,\alpha}\punt\beta\punt\der{\alpha}\equiv \sigma$, which implies  that $\K_{\sst\sigma,\alpha}\equiv\sigma\punt\beta\punt\der{\alpha}^\invumb$.
\end{proof}
A plain translation of \eqref{eq:lagrange} in terms of m.g.f.'s of umbrae gives the traditional Lagrange inversion formula
\begin{equation}\label{eq:typicalLIF}
\big[t^{n-1}\big]f'_\sigma(t)\left(\frac{1}{f_\alpha(t)}\right)^n=n\big[t^n\big]f_\sigma\big((tf_\alpha(t))^\invumb\big)\,.
\end{equation}
Now, let $\eta\equiv\K_{\sigma,\alpha}$. Since $\eta\punt\beta\punt\der{\alpha}\equiv\sigma\punt\bell\punt\der{\alpha}^\invumb\punt\bell\punt\der{\alpha}\equiv\sigma$, we can rewrite \eqref{eq:abel-identity} as
\begin{equation}\label{eq:FTRAnk}
(\gamma+\eta\punt\bell\punt\der{\alpha})^n\simeq\sum_{k=0}^n\binom{n}{k}(\gamma+k\punt\alpha)^{n-k}\eta^k\,\,,\,\, n\ge 0.\qquad\text{(Umbral Abel Identity, Version II)}
\end{equation}
\noindent We shall see in Section~\ref{se:riordan} that formula \eqref{eq:FTRAnk} gives an umbral version of the fundamental theorem of Riordan arrays. In addition, it follows from \eqref{eq:comp-mom} and \eqref{eq:FTRAnk} the next useful umbral identity,
\begin{equation}\label{eq:der}\der{\left(\alpha+\eta\punt\beta\punt
\der{\alpha}\right)}\equiv\der{\eta}\punt\beta\punt\der{\alpha}.\end{equation}
Let us introduce our last umbral tool. By $\LL_{\sst\gamma,\alpha}$, we denote an auxiliary umbra that satisfies  $\LL_{\sst\gamma,\alpha}\equiv-1\punt\K_{\sst\gamma,\alpha}$. We write $\K_{\sst\alpha}:=\K_{\sst\alpha,\alpha}$ and $\LL_{\sst\alpha}:=\LL_{\sst\alpha,\alpha}$. The umbrae $\K_{\sst\alpha}$ and $\LL_{\sst\alpha}$ play a central role in the umbral theory of cumulants~\cite{diNPS:cumulants-convolutions,Petrullo:thesis}. Since $\LL_{\sst\alpha}\equiv-1\punt\alpha\punt\beta\punt\der{\alpha}^\invumb$ and $\der{\varepsilon}\equiv\singleton$, by \eqref{eq:der} we have $\chi\equiv\der{\left(\alpha+\LL_{\sst\alpha}\punt\beta\punt\der{\alpha}\right)}\equiv\der{(\LL_{\sst\alpha})}\punt\beta\punt\der{\alpha}$, so that
\begin{equation}\label{eq:LL}\der{(\LL_{\sst\alpha})}\equiv\der{\alpha}^\invumb.\end{equation}
In particular, we have $\LL_{\LL_{\alpha}}\equiv\alpha$. It is for this reason that $\LL_{\sst\alpha}$ is named the \emph{Lagrange involution} of $\alpha$. Now, for all $k\geq 1$ denote by $\delta^{\sst(k)}$ an umbra whose g.f. $f_{\sst \delta^{\sst(k)}}(t)=1+\frac{t^k}{k!}$. By \eqref{eq:LL}, we have $\delta^{\sst(k)}\punt\beta\punt\der{(\LL_{\sst\alpha})}\equiv\delta^{\sst(k)}\punt\beta\punt\der{\alpha}^\invumb$. Hence, by \eqref{eq:comp-mom} and \eqref{eq:lagrange}, we get $\binom{n}{k}(k\punt\LL_{\sst\alpha})^{n-k}\simeq\binom{n-1}{k-1}(-n\punt\alpha)^{n-k}$, so that
\begin{equation}\label{eq:lagrangek1}n(k\punt\LL_{\sst\alpha})^{n-k}\simeq k(-n\punt\alpha)^{n-k}.\end{equation}
Since $\LL_{\LL_{\alpha}}\equiv\alpha$ and $-n\punt\LL_{\sst\alpha}\equal n\punt\K_{\sst\alpha}$, formula \eqref{eq:lagrangek1} is equivalent to 
\begin{equation}\label{eq:lagrangek2}n(k\punt\alpha)^{n-k}\simeq k(n\punt\K_{\sst\alpha})^{n-k}.\end{equation} 
\begin{Remark}
\noindent As pointed out by G. Andrews \cite{And}, at first glance, the classical umbral calculus seems to be merely a convenient notation for dealing with exponential generating functions. However, it reveals its power in tackling difficult problems concerning generating functions not easily handled by usual algebraic techniques \cite{Ges,DNGS09}. The aforementioned  umbral identities (involving dot products, umbral equivalence and similarity) show not only more appealing formulas but also, from our perspective, more effective ones  since they  allow for further algebraic manipulations that help us to write in polynomial terms identities otherwise stated in terms of the operator $[t^n]$ and generating functions (compare e.g., \eqref{eq:comp-mom} with \eqref{eq:comp-mom-classic} and \eqref{eq:lagrange} with \eqref{eq:typicalLIF}). 
\end{Remark}
\section{Exponential Riordan arrays}
\label{se:riordan}

In this section we use the umbral syntax developed in Section~\ref{se:prelim} to encode Riordan arrays. We show that all the basic facts about Riordan arrays are easily stated and derived by means of elucidating umbral identities. For simplicity, we test the effectiveness of the umbral notation on normalized Riordan arrays; namely, those having diagonal entries all equal to $1$. We provide a simple umbral characterization of the group axioms, describe some important Riordan subgroups, and recover the isomorphism between the Riordan and the Sheffer groups. Likewise, recursive properties of Riordan arrays are obtained as direct  consequences of the umbral Abel identity. These properties will be further generalized in Section \ref{se:genrio}.

\subsection{The exponential Riordan group}
\label{sse:riordanarray}

Riordan arrays are a generalization of the classical Pascal array, whose entries are the well-known binomial coefficients $\binom{n}{k}$ \cite{ShGeWoWo:Rgroup}. This feature, which is not put in evidence in the traditional description of a Riordan array (see Section~\ref{se:intro}), is highlighted by the umbral syntax in the next definition.
\begin{Definition}\label{de:riordanarray}
Given any umbrae $\alpha$ and $\gamma$, the pair $(\gamma,\alpha)$ denotes an infinite lower triangular matrix whose entries $(\gamma,\alpha)_{n,k}$ are given by
\begin{equation}\label{eq:nk-entry-umbral}
(\gamma,\alpha)_{n,k}\simeq \binom{n}{k}\,(\gamma+k \punt\alpha)^{n-k}\quad,\quad\text{for}\quad n\ge k\ge 0\,.
\end{equation}
\noindent We say that $(\gamma,\alpha)$ is an exponential Riordan array generated by  $\alpha$ and $\gamma$. Exponential Riordan arrays of type $(p,q)$, for $p,q\in R[A]$, are defined analogously. Note that if $e^{\gamma t}\simeq d(t)$ and $te^{\alpha t}\simeq h(t)$, with $d(0)=1$, $h(0)=0$, and $h'(0)=1$, then we get a more familiar presentation for \eqref{eq:nk-entry-umbral}; namely
\begin{equation*}
r_{n,k} = (\gamma,\alpha)_{n,k} = E\left[\binom{n}{k}\,(\gamma+k \punt\alpha)^{n-k}\right] = n!\left[t^n\right] d(t)\,\frac{h(t)^k}{k!}\,.
\end{equation*}
\end{Definition}

\noindent In the umbral notation, the identity array is represented by $(\augmentation,\augmentation)$ and the Pascal array by $(\unity,\augmentation)$, where $\unity$ is the unity umbra and $\augmentation$ is the augmentation umbra. Note that, given any umbrae $\gamma$ and $\alpha$, we have $(\gamma,\alpha)_{n,n}= 1$ for all $n\ge 0$. Because of this property, we say that $(\gamma,\alpha)$ is normalized. Any Riordan array $(d(t),h(t))$ with $d(0)\neq 0$, $h(0)=0$, and $h'(0)\neq 0$ can be normalized in a natural way; hence restricting our attention to normalized Riordan arrays is not a loss of generality. From now on, we will simply say Riordan array to mean normalized exponential Riordan array.

Let $f(t)$ and $g(t)$ be the exponential g.f.'s of the sequences $(f_i)$ and $(g_i)$, with $f_0=g_0=1$. In classical terms, a Riordan array $(d(t),h(t))$ relates $f(t)$ with $g(t)$ by means of $(d(t),h(t))f(t)=d(t)f(h(t))=g(t)$. This identity is known as the fundamental theorem in the theory of Riordan arrays. In umbral terms, we shall write $(\gamma,\alpha)\eta\equiv\omega$, for  $\eta$ and $\omega$ in $A$ such that $f_\eta(t)=f(t)$ and $f_\omega(t)=g(t)$. The next result is a direct consequence of identity \eqref{eq:FTRAnk} and Definition \ref{de:riordanarray}.

\begin{Theorem}[Fundamental Theorem of Riordan arrays]\label{th:FTRA}
For all umbrae $\alpha,\gamma,\eta$ we have
\begin{equation}\label{eq:FTRA}
(\gamma,\alpha)\eta\equal\gamma+\eta\punt\bell\punt\der{\alpha}.
\end{equation}
\end{Theorem}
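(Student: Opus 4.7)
The plan is to verify the moment identity $((\gamma,\alpha)\eta)^n \simeq (\gamma + \eta\punt\bell\punt\der{\alpha})^n$ for every $n \geq 0$, since by the definition of $\equal$ this is exactly what is needed.

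First I would unpack the meaning of $(\gamma,\alpha)\eta$. The notation stands for the row-by-row multiplication of the infinite matrix $(\gamma,\alpha)$ against the ``column vector'' whose $k$-th entry is the $k$-th moment $\eta^k$. So by Definition \ref{de:riordanarray} and $R$-linearity of the evaluation $E$,
\begin{equation*}
((\gamma,\alpha)\eta)^n \;\simeq\; \sum_{k=0}^{n} (\gamma,\alpha)_{n,k}\, \eta^k \;\simeq\; \sum_{k=0}^{n} \binom{n}{k}\,(\gamma + k\punt\alpha)^{n-k}\, \eta^k,
\end{equation*}
where we use that $\eta$ is uncorrelated from the umbrae appearing in the construction of $(\gamma,\alpha)_{n,k}$ (this is the standard saturated-alphabet convention: on the right-hand side of an umbral equivalence, fresh similar copies of each umbra are understood, so $\eta^k$ multiplies the entry independently of $\alpha$ and $\gamma$).

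Next, I would recognize that the right-hand sum is precisely the one appearing in Version II of the umbral Abel identity, namely \eqref{eq:FTRAnk}, which states
\begin{equation*}
(\gamma + \eta\punt\bell\punt\der{\alpha})^n \;\simeq\; \sum_{k=0}^{n} \binom{n}{k}\,(\gamma + k\punt\alpha)^{n-k}\, \eta^k.
\end{equation*}
Combining the two displays yields $((\gamma,\alpha)\eta)^n \simeq (\gamma + \eta\punt\bell\punt\der{\alpha})^n$ for every $n$, which by definition of similarity (and hence of $\equal$, since both sides represent the same moment sequence) gives $(\gamma,\alpha)\eta \equal \gamma + \eta\punt\bell\punt\der{\alpha}$.

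There is no real obstacle here beyond bookkeeping: the content of the theorem is that the umbral Abel identity in Version II is exactly the fundamental theorem of Riordan arrays once the entries are written in the form \eqref{eq:nk-entry-umbral}. The only subtle point to flag carefully in the writeup is the uncorrelation of $\eta$ from the umbrae building the matrix entries, so that the product $(\gamma,\alpha)_{n,k}\,\eta^k$ inside $E[\,\cdot\,]$ factors as $E[(\gamma,\alpha)_{n,k}]\cdot E[\eta^k]$; this is guaranteed by the saturated-alphabet convention that allows us to regard the $\eta$ appearing in $(\gamma,\alpha)\eta$ as a fresh umbra similar to the one produced by the identity \eqref{eq:FTRAnk}.
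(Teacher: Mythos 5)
Your proof is correct and follows the paper's own route exactly: the paper presents Theorem \ref{th:FTRA} as a direct consequence of Definition \ref{de:riordanarray} together with Version II of the umbral Abel identity \eqref{eq:FTRAnk}, which is precisely the computation you carry out. Your extra remark on the uncorrelation of $\eta$ from $\gamma$ and $\alpha$ is a sensible piece of bookkeeping that the paper leaves implicit under the saturated-alphabet convention.
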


The fundamental theorem of Riordan arrays is nothing but a reformulation of the umbral Abel identity. Observe also that 
\begin{equation*}
\big((\gamma,\alpha)\eta\big)^n\simeq\dsum_{k=0}^n(\gamma,\alpha)_{n,k}\,\,\eta^k\simeq\dsum_{k=0}^n r_{n,k}f_k\,.
\end{equation*}
\noindent Hence, after applying the evaluation map $E$, formula \eqref{eq:FTRA} gives the summation formula 
\begin{equation*}
\sum_{k=0}^n r_{n,k}\,f_k = n! \left[t^n\right] d(t) f(h(t))\,.
\end{equation*}
\noindent By taking $\eta\equiv\unity$ in \eqref{eq:FTRA}, the resulting umbral polynomial $\gamma+\beta\punt\der{\alpha}$ represents the sequence of numbers whose $n^{\rm{th}}$ term is the sum of all entries occurring in the $n^{\rm{th}}$ row of $(\gamma,\alpha)$; namely,
\begin{equation*}
\big(\gamma+\beta\punt\der{\alpha}\big)^n\simeq \sum_{k=0}^n(\gamma,\alpha)_{n,k}\,.
\end{equation*}
\noindent With the same ease, the product of Riordan arrays is encoded in umbral terms as follows.
\begin{Proposition}\label{th:Riomul} Let $(\gamma,\alpha)$ and $(\sigma,\rho)$ be Riordan arrays. We have 
 \begin{equation}\label{eq:Riomul}
(\gamma,\alpha)(\sigma,\rho)=(\gamma+\sigma\punt\bell\punt\der{\alpha}\,,\,\alpha+\rho\punt\bell\punt\der{\alpha})\,.
\end{equation}
\end{Proposition}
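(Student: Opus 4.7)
The natural approach is to verify the identity by testing both sides against an arbitrary umbra $\eta$ and invoking the Fundamental Theorem of Riordan Arrays (Theorem~\ref{th:FTRA}). Since a Riordan array is determined up to similarity by its action on umbrae (moment vectors are dense enough among similarity classes), it suffices to check that
$$
\bigl((\gamma,\alpha)(\sigma,\rho)\bigr)\eta \,\equal\, \bigl(\gamma + \sigma\punt\bell\punt\der{\alpha},\, \alpha + \rho\punt\bell\punt\der{\alpha}\bigr)\eta
$$
for every umbra $\eta$ (chosen uncorrelated with the others).

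First I would compute the left-hand side in two stages. By associativity of matrix--vector action, $((\gamma,\alpha)(\sigma,\rho))\eta = (\gamma,\alpha)\bigl((\sigma,\rho)\eta\bigr)$. Applying Theorem~\ref{th:FTRA} to the inner product gives $(\sigma,\rho)\eta \equal \sigma + \eta\punt\bell\punt\der{\rho}$, and then a second application yields
$$
(\gamma,\alpha)(\sigma,\rho)\eta \,\equal\, \gamma + \bigl(\sigma + \eta\punt\bell\punt\der{\rho}\bigr)\punt\bell\punt\der{\alpha}.
$$
For the right-hand side, a single application of Theorem~\ref{th:FTRA} gives
$$
\bigl(\gamma + \sigma\punt\bell\punt\der{\alpha},\,\alpha + \rho\punt\bell\punt\der{\alpha}\bigr)\eta \,\equal\, \gamma + \sigma\punt\bell\punt\der{\alpha} + \eta\punt\bell\punt\der{(\alpha + \rho\punt\bell\punt\der{\alpha})}.
$$
Identity~\eqref{eq:der} (applied with $\rho$ in the role of $\eta$) rewrites the last derivative umbra as $\der{\rho}\punt\bell\punt\der{\alpha}$, so the right-hand side equals $\gamma + \sigma\punt\bell\punt\der{\alpha} + \eta\punt\bell\punt\der{\rho}\punt\bell\punt\der{\alpha}$.

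Comparing the two expressions, the reduction boils down to the elementary distributive identity
$$
(\sigma + \eta\punt\bell\punt\der{\rho})\punt\bell\punt\der{\alpha} \,\equiv\, \sigma\punt\bell\punt\der{\alpha} + \eta\punt\bell\punt\der{\rho}\punt\bell\punt\der{\alpha},
$$
which I would verify by passing to moment generating functions: both sides have m.g.f.\ equal to $f_\sigma(tf_\alpha(t))\cdot f_{\eta\punt\bell\punt\der{\rho}}(tf_\alpha(t))$, using $f_{\der{\alpha}}(t)-1 = tf_\alpha(t)$ and the uncorrelation of $\sigma$ and $\eta\punt\bell\punt\der{\rho}$.

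The main conceptual point, rather than a real obstacle, is recognizing that \eqref{eq:der} is exactly what lets one ``pull'' the derivative umbra of a sum through a composition, converting the second component of the product Riordan array into the nested composition that matches the iterated action on $\eta$. The only mildly delicate step is to make sure that the umbrae $\gamma,\alpha,\sigma,\rho,\eta$ are chosen pairwise uncorrelated (which the saturated alphabet permits), so that the g.f.\ factorizations and distributivity applied above are valid.
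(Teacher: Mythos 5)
Your proof is correct and follows essentially the same route as the paper's: apply the Fundamental Theorem twice to the left-hand side, once to the right-hand side, and use identity \eqref{eq:der} to rewrite $\der{(\alpha+\rho\punt\bell\punt\der{\alpha})}$ as $\der{\rho}\punt\bell\punt\der{\alpha}$. The only difference is that you make explicit (and verify via m.g.f.'s) the distributivity of the dot-composition over the sum $\sigma+\eta\punt\bell\punt\der{\rho}$, a step the paper leaves implicit.
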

\begin{proof}
Let $\eta$ be any umbra. From the fundamental theorem and identity \eqref{eq:der}, we have
 \begin{eqnarray*} (\gamma,\alpha)(\sigma,\rho)\eta\equiv
(\gamma,\alpha)(\sigma+\eta\punt\bell\punt\der{\rho})\equiv (\gamma+\sigma\punt\bell\punt\der{\alpha})+
\eta\punt\bell\punt \alpha+\der{\rho}\punt\bell\punt\der{\alpha}\\
\equiv
(\gamma+\sigma\punt\bell\punt\der{\alpha})+
\eta\punt\bell\punt\der{(\alpha+\rho\punt\bell\punt\der{\alpha})}\equiv (\gamma+\sigma\punt\bell\punt\der{\alpha}\,,\,\alpha+\rho\punt\bell\punt\der{\alpha})\eta
\end{eqnarray*}
Formula \eqref{eq:Riomul} follows from here.
\end{proof}
\noindent Setting $f_\gamma(t)=d(t)$, $tf_\alpha(t)=h(t)$, $f_\sigma(t)=a(t)$, and $tf_\rho(t)=b(t)$, we recover the traditional formula for the product of the  Riordan arrays $(d(t),h(t))$ and $(a(t),b(t))$; that is, 
$$
\big(d(t), h(t)\big)\,\big(a(t),
b(t)\big)=\big(d(t)\,a\big(h(t)\big), b\big(h(t)\big)\big)
$$
\begin{Theorem} Denote by $\Rio$ the set of all Riordan arrays. With respect to matrix multiplication, $\Rio$ is a group, where the identity array is $(\augmentation,\augmentation)$, and the multiplicative inverse of any $(\gamma,\alpha)\in\Rio$ is given by $(\gamma,\alpha)^{-1}=(\LL_{\sst\gamma,\alpha},\LL_{\sst\alpha})$.
\end{Theorem}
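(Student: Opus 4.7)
The plan is to verify the group axioms directly using the product formula \eqref{eq:Riomul} of Proposition \ref{th:Riomul}. Closure is immediate since the formula exhibits the product of two Riordan arrays as a Riordan array, and associativity is inherited from ordinary matrix multiplication. Thus only two substantive tasks remain: to check that $(\augmentation,\augmentation)$ acts as a two-sided identity, and that $(\LL_{\sst\gamma,\alpha},\LL_{\sst\alpha})$ is a two-sided inverse of $(\gamma,\alpha)$.

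For the identity, I would first observe that $f_{\der{\augmentation}}(t)=1+t\,f_\augmentation(t)=1+t=f_\singleton(t)$, so $\der{\augmentation}\equiv\singleton$. Combining this with $\bell\punt\singleton\equiv\unity$ from the Example in Section \ref{se:prelim} and the identity $\sigma\punt\unity\equiv\sigma$ (immediate via $f_{\sigma\punt\unity}(t)=f_\sigma(\log e^{t})=f_\sigma(t)$) yields $\sigma\punt\bell\punt\der{\augmentation}\equiv\sigma$ for every umbra $\sigma$. Substituting $\gamma=\alpha=\augmentation$ into \eqref{eq:Riomul} and invoking $\augmentation+X\equal X$ gives $(\augmentation,\augmentation)(\sigma,\rho)=(\sigma,\rho)$; a symmetric calculation handles right multiplication.

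For the inverse, applying \eqref{eq:Riomul} produces
$$
(\gamma,\alpha)(\LL_{\sst\gamma,\alpha},\LL_{\sst\alpha})=\big(\gamma+\LL_{\sst\gamma,\alpha}\punt\bell\punt\der{\alpha},\;\alpha+\LL_{\sst\alpha}\punt\bell\punt\der{\alpha}\big).
$$
The key input is the umbral identity $\K_{\sst\sigma,\alpha}\punt\bell\punt\der{\alpha}\equiv\sigma$, which is exactly the step established inside the proof of Theorem \ref{th:lagrange}. Since the dot product is associative on umbrae (a one-line check using the composition rule $f_{a\punt b}(t)=f_a(\log f_b(t))$) and $\LL_{\sst\gamma,\alpha}\equiv-1\punt\K_{\sst\gamma,\alpha}$, I would deduce $\LL_{\sst\gamma,\alpha}\punt\bell\punt\der{\alpha}\equiv-1\punt\gamma$, so the first coordinate simplifies via $\gamma+(-1)\punt\gamma\equal\augmentation$ from the Example. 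The second coordinate is the special case $\gamma=\alpha$ of the same computation.

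For the left inverse $(\LL_{\sst\gamma,\alpha},\LL_{\sst\alpha})(\gamma,\alpha)$, I would symmetrize the argument using \eqref{eq:LL} to rewrite $\der{\LL_{\sst\alpha}}\equiv\der{\alpha}^\invumb$, and then apply the companion Lagrange identity $\gamma\punt\bell\punt\der{\alpha}^\invumb\equiv\K_{\sst\gamma,\alpha}$ to cancel each coordinate against $\LL_{\sst\gamma,\alpha}$ and $\LL_{\sst\alpha}$ respectively. The only mildly delicate point in the whole argument is the implicit use of associativity of the dot product $\punt$; once that is acknowledged, the proof is bookkeeping with the identities gathered in Section \ref{se:prelim} and the product formula \eqref{eq:Riomul}.
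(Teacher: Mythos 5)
Your proposal is correct and follows essentially the same route as the paper, whose proof is precisely the ``straightforward verification using identity \eqref{eq:Riomul}'' that you carry out in detail: closure and associativity from Proposition \ref{th:Riomul} and matrix multiplication, the identity via $\der{\augmentation}\equiv\singleton$, and the two-sided inverse via $\K_{\sst\gamma,\alpha}\punt\bell\punt\der{\alpha}\equiv\gamma$ together with $\der{(\LL_{\sst\alpha})}\equiv\der{\alpha}^\invumb$. Your explicit acknowledgement of the implicit use of associativity of the dot product is a reasonable point of care, but it introduces no gap.
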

\begin{proof}
This is a straightforward verification using identity \eqref{eq:Riomul}.
\end{proof}

\subsection{The Sheffer group}

Closely related to Riordan arrays are Sheffer polynomial sequences. Several important
classes of orthogonal polynomial systems: Hermite, Charlier, Laguerre, Meixner of the first and the second kind, are special instances of  \emph{Sheffer sequences}. These polynomial sequences have been deeply studied using finite operator calculus (as shown for instance in Roman's book~\cite{Roman:umbral}).  A treatment of Sheffer sequences under the renewed umbral symbolism can be found in \cite{diNNS:Sheffer,Petrullo:abelidentity,Tay}. We briefly review in this section the connection between Riordan arrays and Sheffer polynomial sequences.
 
A Riordan array $(\gamma,\alpha)$ defines a polynomial sequence
$\entriess_n(x)=\sum_{k=0}^n\,(\gamma,\alpha)_{n,k}\,x^k$. By the fundamental theorem of Riordan arrays, we can write
\begin{equation*}
\entriess_n(x)\simeq\big((\gamma,\alpha)x\big)^n\simeq\big(\gamma+x\punt\beta\punt\der{\alpha}\big)^n \quad,\quad\text{for\ \ all}\quad n\ge 0\,.
\end{equation*}
\noindent In a classical fashion, the sequence $\entriess_n(x)$ satisfies
$1+\sum_{n\geq 1}\entriess_n(x)\frac{t^n}{n!}=f_\gamma(t)\,e^{x\,tf_\alpha(t)}$; namely, it is a Sheffer sequence. We can see from the fundamental theorem that each Riordan array determines a Sheffer sequence and vice versa. In this sense, we say that $\big(\entriess_n(x)\big)$ is the Sheffer sequence of $(\gamma,\alpha)$. Sheffer sequences are endowed with a binary operation called \emph{umbral composition}. More precisely, if $\big(\entriess_n(x)\big)$ and $\big(\entries_n(x)\big)$ are the Sheffer sequences of $(\gamma,\alpha)$ and $(\sigma,\rho)$ respectively, then the umbral composition of $\big(\entriess_n(x)\big)$ and $\big(\entries_n(x)\big)$ is the polynomial sequence $\big([\entriess\,\entries]_n(x)\big)$ defined by
\begin{equation*}
[\entriess\,\entries]_n(x)=\sum_{k=0}^n\,(\gamma,\alpha)_{n,k}\,\entries_k(x)\quad\text{for}\quad n\ge 0\,.
\end{equation*}
\noindent Expanding $\entries_n(x)$, we may easily check that the coefficient of $x^k$ in $[\entriess\,\entries]_n(x)$ is the $(n,k)$-entry of the Riordan array $(\gamma,\alpha)(\sigma,\rho)$. Hence, by \eqref{eq:Riomul}, the polynomial sequence $[\entriess\,\entries]_n(x)$ is the Sheffer sequence of $(\gamma+\sigma\punt\beta\punt\der{\alpha}\,,\,\alpha+\rho\punt\beta\punt\der{\alpha})$. Also, it readily follows that the Sheffer sequence $x^n$ of $(\augmentation,\augmentation)$ is the identity element for the umbral composition and that $\big(\entriess_n(x)\big)$ is the inverse of $\big(\entries_n(x)\big)$ if and only if $(\sigma,\rho)=(\LL_{\sst\gamma,\alpha},\LL_{\sst\alpha})$. Thus, the set $\Sheff$ of all  Sheffer sequences endowed with the umbral composition is a group, and moreover, it follows from the previous discussion that $\Sheff$ and $\Rio$ are isomorphic.

\subsection{Riordan subgroups}
\label{sse:riordansubgroups}

\begin{table}[ht]
\centering
\begin{tabular}{lcclclcl}
               \toprule
                \textbf{Subgroup} & $(\gamma,\alpha)$ & & $(\gamma,\alpha)_{n,k}$ & & $(\gamma,\alpha)^\inv$ & & \hspace{2cm}Product\\
               \midrule
               Appell & $(\gamma,\augmentation)$ & & $\Binom{n}{k}\gamma^{n-k}$ & & $(-1\punt\gamma,\augmentation)$ & & $(\gamma,\augmentation)(\sigma,\augmentation)=(\gamma+\sigma,\augmentation)$ \\[1em]
               Associated & $(\augmentation,\alpha)$ & & $\Binom{n}{k}\big(k\punt\alpha\big)^{n-k}$  & &  $(\augmentation,\LL_{\sst\alpha})$ & & $(\augmentation,\alpha)(\augmentation,\rho)=(\augmentation,\alpha+\rho\punt\bell\punt\der{\alpha})$ \\[1em]
               Bell & $(\alpha,\alpha)$ & &  $\Binom{n}{k}\big((k+1)\punt\alpha\big)^{n-k}$ & & $(\LL_{\sst\alpha},\LL_{\sst\alpha})$ & &  $(\alpha,\alpha)(\sigma,\sigma)=(\alpha+\sigma\punt\bell\punt\der{\alpha},\alpha+\sigma\punt\bell\punt\der{\alpha})$ \\
               \bottomrule
\end{tabular}
\caption{Some Riordan subgroups}\label{tb:subgroups}
\vspace{-0.8cm}
\end{table}

Riordan subgroups can be neatly described in umbral terms. A sample of them is shown in Table \ref{tb:subgroups}. The umbral notation allows for easy verifications. For example, a straightforward use of \eqref{eq:Riomul} yields $(\gamma,\alpha)=(\gamma,\augmentation)(\augmentation,\alpha)$, showing that any Riordan array is the product of an Appell array with an Associated array. Similarly, one can write $(\gamma,\alpha)=(\gamma+(-1)\punt\alpha,\augmentation)(\alpha,\alpha)$,  which shows that any Riordan array is also the product of an Appell array with a Bell array. Likewise, the fundamental theorem of Riordan arrays applied to each of the subgroups shown in Table \ref{tb:subgroups} yields: for the Appell subgroup, 
\[(\gamma,\varepsilon)\eta\equal \gamma+\eta \punt\bell\punt\der{\augmentation}\equal \gamma+\eta \punt\bell\punt\singleton\equal \gamma+\eta\,.\]

\noindent For the Associated subgroup, 
\[(\augmentation,\alpha)\eta\equal\augmentation+\eta\punt\beta\punt\der{\alpha}\equal\eta\punt\beta\punt\der{\alpha}\,.\]

\noindent Finally, for the Bell subgroup,  
\[(\alpha,\alpha)\eta\equal\alpha+\eta\punt\beta\punt\der{\alpha}\,.\]

\subsection{Recursive properties}\label{se:recursions}
Riordan arrays are also characterized by means of recurrence relations. The recurrence relations   stated next (Theorem~\ref{th:recursions1}) are direct consequences of  Definition~\ref{de:riordanarray} and the umbral Abel identity \eqref{eq:abel-identity}. Since these formulas  are particular cases of recurrence relations satisfied by Riordan arrays in a more general context, we postpone their proofs to Section \ref{se:genrio}.
\begin{Theorem}\label{th:recursions1}
A Riordan array $(\gamma,\alpha)$ satisfies the following recursive properties:
\begin{equation}\label{eq:colrec}(\gamma,\alpha)_{n,k}\simeq \frac{n}{k}\,\sum_{i=0}^{n-k}\binom{n-1}{i}    \,\alpha^i\,(\gamma,\alpha)_{n-1-i,k-1}\quad,\quad n\geq k\ge 1\,.\end{equation}

\begin{equation}\label{eq:rowrec}(\gamma,\alpha)_{n,k}\simeq \frac{n}{k}\,\sum_{i=0}^{n-k}\binom{k-1+i}{i}\,\K_{\sst\alpha}^i\,(\gamma,\alpha)_{n-1,k-1+i}\quad,\quad n\geq k\ge 1\,.\end{equation}

\begin{equation}\label{eq:rowrec2}(\gamma,\alpha)_{n,k}\simeq \binom{n}{k}\,\sum_{i=0}^{n-k}\,(k\punt\K_{\sst\alpha})^i\,(\gamma,\alpha)_{n-k,i}\quad,\quad n\ge k\ge 1\,.\end{equation}
\end{Theorem}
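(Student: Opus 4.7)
My plan is to derive all three identities directly from the umbral definition $(\gamma,\alpha)_{n,k}\simeq\binom{n}{k}(\gamma+k\punt\alpha)^{n-k}$ together with the Umbral Abel Identity in Version II, formula \eqref{eq:FTRAnk}, rather than via the more general result promised for Section~\ref{se:genrio}. The single umbral fact that fuels the arguments for \eqref{eq:rowrec} and \eqref{eq:rowrec2} is the Lagrange-type identity $\K_{\sst\alpha}\punt\bell\punt\der{\alpha}\equiv\alpha$, which is obtained inside the proof of Theorem~\ref{th:lagrange}.

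For \eqref{eq:colrec}, I would expand the RHS using the definition, obtaining
\[
\tfrac{n}{k}\sum_{i=0}^{n-k}\binom{n-1}{i}\binom{n-1-i}{k-1}\,\alpha^i\,(\gamma+(k-1)\punt\alpha)^{n-k-i},
\]
and collapse the two binomial coefficients via $\tfrac{n}{k}\binom{n-1}{i}\binom{n-1-i}{k-1}=\binom{n}{k}\binom{n-k}{i}$. The umbral binomial theorem then recognises the remaining inner sum as $\binom{n}{k}(\alpha+\gamma+(k-1)\punt\alpha)^{n-k}$, and the elementary fact $\alpha+(k-1)\punt\alpha\equal k\punt\alpha$ (checked at the level of m.g.f.'s) closes the loop with the definition of $(\gamma,\alpha)_{n,k}$.

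For \eqref{eq:rowrec2}, the same bookkeeping extracts a factor $\binom{n}{k}$ and reduces the claim to the purely umbral statement
\[
(\gamma+k\punt\alpha)^m\simeq\sum_{i=0}^{m}\binom{m}{i}(\gamma+i\punt\alpha)^{m-i}(k\punt\K_{\sst\alpha})^i,\qquad m:=n-k,
\]
which is exactly \eqref{eq:FTRAnk} applied with $\eta=k\punt\K_{\sst\alpha}$, once one notes the m.g.f.\ computation $(k\punt\K_{\sst\alpha})\punt\bell\punt\der{\alpha}\equiv k\punt(\K_{\sst\alpha}\punt\bell\punt\der{\alpha})\equiv k\punt\alpha$. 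A parallel reduction for \eqref{eq:rowrec} produces
\[
(\gamma+k\punt\alpha)^m\simeq\sum_{i=0}^{m}\binom{m}{i}\K_{\sst\alpha}^i(\gamma+(k-1+i)\punt\alpha)^{m-i}.
\]
Here I would introduce a fresh auxiliary umbra $\gamma'\equal\gamma+(k-1)\punt\alpha$ uncorrelated from a fresh copy of $\alpha$, rewrite the LHS as $(\gamma'+\alpha)^m$ and each $\gamma+(k-1+i)\punt\alpha$ as $\gamma'+i\punt\alpha$, then apply \eqref{eq:FTRAnk} with $\eta=\K_{\sst\alpha}$, using $\K_{\sst\alpha}\punt\bell\punt\der{\alpha}\equiv\alpha$ to turn the LHS of \eqref{eq:FTRAnk} back into $(\gamma'+\alpha)^m$.

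The only real obstacle is bookkeeping hygiene: each occurrence of a $\punt$-product must live on its own independent alphabet of similar umbrae, so that the umbral binomial expansion and the composition identity $x\punt\omega\punt\bell\punt\der{\alpha}\equiv x\punt(\omega\punt\bell\punt\der{\alpha})$ apply with no hidden correlations. Once this hygiene is respected, each of \eqref{eq:colrec}, \eqref{eq:rowrec}, \eqref{eq:rowrec2} is essentially one line past \eqref{eq:FTRAnk}, which is consistent with the authors' remark that they all flow from a single, more general umbral formula.
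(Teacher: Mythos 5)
Your proof is correct and follows essentially the same route as the paper: the paper postpones the proof to Section \ref{se:genrio}, where \eqref{eq:colrec}, \eqref{eq:rowrec} and \eqref{eq:rowrec2} arise as the $c_n=n!$ specializations of Theorem \ref{th:recver}, Theorem \ref{th:rechor} (each with $m=1$) and Corollary \ref{co:rec}, all of which are instances of the Abel-identity-based Theorem \ref{th:nonrec} with $\lambda\equal\augmentation$, $(\lambda,m)=(\alpha,1)$ and $(\lambda,m)=(\alpha,k)$ respectively. Your three direct computations via \eqref{eq:FTRAnk} together with $\K_{\sst\alpha}\punt\bell\punt\der{\alpha}\equiv\alpha$ are exactly these specializations carried out by hand, so nothing is missing.
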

\begin{Remark} Traditional formulas are obtained by applying the evaluation map $E$ on the identities \eqref{eq:colrec}, \eqref{eq:rowrec} and \eqref{eq:rowrec2}. Thus, the umbral equivalence $\simeq$ is substituted by the ordinary equality $=$ and $\alpha^i$, $\K_{\sst\alpha}^i$ and $(k\punt\K_{\sst\alpha})^i$ are replaced with their corresponding moments, say $h_i\simeq\alpha^i$, $a_i\simeq\K_{\sst\alpha}^i$ and $a_i^{\sst(k)}\simeq(k\punt\K_{\sst\alpha})^i$, where $a_i^{\sst(k)}\simeq i![t^i](f_{\K_{\sst\alpha}}(t))^k=i![t^i]\big(f_\alpha\big((tf_\alpha(t))^\invumb\big)\big)^k$. The sequence $(a_i)=(a_i^{\sst(1)})$ is nothing but the analogue of the $A$-sequence of Rogers \cite{Rog} for exponential Riordan arrays.
\end{Remark} 
\section{Examples}
\label{se:examples}
The purpose of this section is to analyze concrete examples  of classical Riordan arrays and their corresponding Sheffer sequences by using the umbral syntax.
\subsection*{Pascal array} The Pascal array $\Pascal=(\unity,\augmentation)$ (written as $(e^t,t)$ in classical terms) satisfies
\[\Pascal_{n,k}\simeq\binom{n}{k}(\unity+k\punt \augmentation)^{n-k}\simeq\binom{n}{k}(\unity)^{n-k}\simeq\binom{n}{k}.\]
\noindent Since $\Pascal$ is an Appell array, we have
$\Pascal\unity\equal \unity+\unity\equiv 2$, so that, after evaluation, we recover the well-known identity $2^n=\sum_{k=0}^n\binom{n}{k}$. Note that the identity $\Pascal x\equal 1+x$ encodes the formula $(x+1)^n=\sum_{k=0}^n \binom{n}{k}x^k$.
\subsection*{Stirling arrays} The Stirling array of the second kind, traditionally written as $(1,e^t-1)$, is encoded by $\StirlingII=(\augmentation,-1\punt\bernoulli)$, where $\bernoulli$ is the Bernoulli umbra (see Table \ref{tb:keyumbrae}). It is an Associated array. Its entries $\StirlingII_{n,k}$ are the classical Stirling numbers of the second kind, usually denoted by $S(n,k)$. Some known properties of this array can be derived from the umbral setting as follows.
\begin{enumerate}[itemsep=1ex,leftmargin=0.8cm]
\item Via generating functions, it is easy to see that $\der{(-1\punt\bernoulli)}\equiv\unity$. It follows from Theorem \ref{th:FTRA} that $\StirlingII\unity\equal\augmentation+\unity\punt\beta\punt\der{(-1\punt\bernoulli)}\equiv \beta$. After evaluation, the sequence of row sums of this array gives the sequence of Bell numbers; that is, $\sum_{k=0}^nS(n,k)=B_n$, as expected.

\item Let $\big(\phi_n(x)\big)$ be the sequence of polynomials in $x$ represented by $x\punt\bell$. Since $\StirlingII x\equal x\punt\beta$, the Sheffer sequence of $\StirlingII$ is precisely $\big(\phi_n(x)\big)$. After evaluation, we obtain $\phi_n(x)=\sum_{k=0}^n S(n,k)\,x^k$. The $\phi_n(x)$ are called exponential polynomials~\cite{Roman:umbral}.

\item Theorem \ref{th:recursions1} allows us to obtain recurrence formulas for the Stirling numbers of second kind. Since $\der{(-1\punt\bernoulli)}\equiv\unity$, we have $\der{(-1\punt\bernoulli)}^i\simeq i(-1\punt\bernoulli)^{i-1}\simeq 1$ for all $i\ge 1$. Hence, $(-1\punt \bernoulli)^i\simeq\frac{1}{i+1}$. From identity \eqref{eq:colrec} and after evaluation, we obtain (compared with \cite{Comtet})
\[S(n,k)=\frac{n}{k}\sum_{i=0}^{n-k}\binom{n-1}{i}\,\frac{1}{(i+1)}\,S(n-1-i,k-1)\,.\]

\noindent Also, since $\K_{-1\punt\bernoulli}\equiv-1\punt\bernoulli\punt\beta\punt\der{(-1\punt\bernoulli)}^\invumb$, and $\der{(-1\punt\bernoulli)}^\invumb\equiv\unity^\invumb\equiv\chi\punt\chi$, we have  $\K_{-1\punt\bernoulli}\equiv-1\punt\bernoulli\punt\chi$. It is easy to check that the g.f. of $-1\punt\bernoulli\punt\chi$ is $f_{-1\punt\bernoulli\punt\chi}(t)=\frac{t}{\log(1+t)}$. The rational numbers  represented by the moments of $-1\punt\bernoulli\punt\chi$ are known as Cauchy numbers of the first type \cite{Comtet}. Following \cite{MSV:Cauchy}, we denote these numbers by $\mathscr{C}_i$. From \eqref{eq:rowrec}, we get the following recurrence identity that relates Stirling numbers of the second kind and Cauchy numbers of the first kind (compare with \cite[Theorem 3.1]{MSV:Cauchy}).
\[S(n,k)=\frac{n}{k}\sum_{i=0}^{n-k}\binom{k-1+i}{i}\mathscr{C}_i\,S(n-1-i,k-1)\,.\]
\noindent The moments $(k\punt\K_{-1\punt\bernoulli})^i\simeq\mathscr{C}_i^{\sst(k)}=i![t^i]\big(\frac{t}{\log(1+t)}\big)^k$ are known as generalized Cauchy numbers of the first type. They show up in the following recurrence relation obtained from \eqref{eq:rowrec2},
\[S(n,k)=\binom{n}{k}\sum_{i=0}^{n-k}\mathscr{C}^{\sst (k)}_i\,S(n-k,i)\,.\]
\end{enumerate}

\noindent Let us now denote by $\StirlingI$ the array whose entries $\StirlingI_{n,k}$ are the Stirling numbers of the first kind $s(n,k)$. In traditional terms, we have $\StirlingI=(1,\log(1+t))$. It is known that this array is the inverse of $\StirlingII$. Since $\StirlingII$ is an Associated array and $\LL_{\sst-1\punt\bernoulli}\equiv-1\punt\K_{\sst -1\punt\bernoulli}\equiv\bernoulli\punt\chi$, it follows that $\StirlingI=(\augmentation, \LL_{\sst-1\punt\bernoulli})=(\augmentation,\bernoulli\punt\chi)$. As shown above, known properties involving the numbers $s(n,k)$ can be obtained through the umbral setting as follows.
\begin{enumerate}[itemsep=1ex,leftmargin=0.8cm]
\item Via g.f.'s we have $\der{(\bernoulli\punt\chi)}\equiv\chi\punt\chi$. By Theorem \ref{th:FTRA}, we obtain $\StirlingI\unity\equal\unity\punt\bell\punt\der{(\bernoulli\punt\chi)}\equiv\unity\punt\bell\punt\singleton\punt\singleton\equal\chi$. As $\StirlingI_{0,0}\simeq 1$, $\StirlingI_{1,1}\simeq 1$ and $\StirlingI_{n,0}\simeq 0$ for $n\geq 1$, it then follows after evaluation that
\[0=\sum_{k=0}^n s(n,k)\quad\text{for}\quad n\ge 2\,.\]
\item  Note that for any $x\in R\cup A$, we have $f_{x\punt\singleton}(t)\simeq(1+t)^x=1+\sum_{n\ge 1}(x)_n\frac{t^n}{n!}$, where $(x)_n$ is the lower factorial of $x$. It follows that $(x\punt\singleton)^n\simeq(x)_n$ for all $n\ge 0$. Since $\StirlingI x\equiv x\punt\bell\punt \der{(-1\punt\bernoulli)}\equal x \punt\bell\punt\singleton\punt\singleton\equal x\punt\singleton$, the Sheffer sequence of $\StirlingI$ is the sequence of falling factorials $\big((x)_n\big)$, that is
\[(x)_n=\sum_{k=0}^n s(n,k)\,x^k\,.\]
\item Since $\der{(\bernoulli\punt\chi)}^n\simeq(\chi\punt\chi)^n\simeq(\singleton)_n\simeq (-1)^{n-1}(n-1)!$, identity \eqref{eq:colrec} specializes into
\[s(n,k)=\frac{n}{k}\sum_{i=0}^{n-k}\binom{n-1}{i}\frac{(-1)^i}{(i+1)}\,i!\, s(n-1-i,k-1)\,.\]
Also, since $\K_{\bernoulli\punt\singleton}\equiv\bernoulli$ and $\bernoulli^i\simeq b_i$ (the $i^{\rm{th}}$ Bernoulli number), identity \eqref{eq:rowrec} gives
\[s(n,k)=\frac{n}{k}\sum_{i=0}^{n-k}\binom{k-1+i}{i}\,b_i\,s(n-1-i,k-1)\,.\]
\noindent Finally, for every $k\ge 0$, the umbra $k\punt\bernoulli$ represents the sequence of generalized Bernoulli numbers $b_i^{\sst(k)}$; that is, $(k\punt\bernoulli)^i\simeq b_i^{\sst(k)}=i! [t^i] \big(\frac{t}{e^t-1}\big)^k$. It then follows from \eqref{eq:rowrec2} that
\[s(n,k)=\binom{n}{k}\sum_{i=0}^{n-k}\,b_i^{\sst(k)}\,s(n-k,i)\,.\]
\end{enumerate} 
\section{Generalized Riordan arrays}\label{se:genrio}

As seen in the Introduction, Riordan arrays can be described in several ways according to the type of their generating functions and the manner in which coefficients are extracted from them. Thus, the notion of a generalized Riordan array with respect to a given sequence of nonzero numbers arises \cite{WW}. By means of the umbral syntax, generalized Riordan arrays are handled in a neat and unified way, extending the discussion on exponential Riordan arrays given in Section \ref{se:riordan}. In this regard, as a direct consequence of the umbral Abel identity \eqref{eq:abel-identity}, we obtain an important umbral polynomial equivalence (Theorem \ref{th:mother-rec}) from which a novel non-recursive formula for Riordan arrays is derived (Theorem \ref{th:nonrec}). Specializations of Theorem \ref{th:nonrec} allows us to recover some known recurrence relations of Riordan arrays (Theorems \ref{th:rechor} and \ref{th:recver}) and unveil a new recurrence relation (Theorem \ref{th:recdiff}).

\begin{Definition}\label{de:genriordanarray}
Let $(c_n)$ be a sequence of nonzero numbers such that $c_0=1$. Let $\omega$ be an umbra whose moments are $\omega^n\simeq\frac{n!}{c_n}$, for all $\ n\ge 0$. Given two umbrae $\alpha$ and $\gamma$, we denote by $\prescript{}{\omega}{(\gamma,\alpha)}$ the infinite lower triangular matrix whose entries $\prescript{}{\omega}{(\gamma,\alpha)}_{n,k}$ satisfy
\begin{equation}\label{def:gen_rio}
\prescript{}{\omega}{(\gamma,\alpha)}_{n,k}\simeq \frac{c_n}{c_k}\,\frac{(\gamma+k\punt\alpha)^{n-k}}{(n-k)!}\,,\quad\text{for}\quad n\geq k\geq 0.
\end{equation}
The matrix $\prescript{}{\omega}{(\gamma,\alpha)}$ is called $\omega$-Riordan array and it is said to be generated by $\alpha$ and $\gamma$. In general, for $p,q\in R[A]$, we define  $\omega$-Riordan arrays $\prescript{}{\omega}{(p,q)}$ in an analogous way. Observe that $\prescript{}{\omega}{(\gamma,\alpha)}_{n,n}=1$. This means that we are dealing with normalized arrays.
\end{Definition}

\noindent Letting $e^{\gamma t}\simeq d(t)=\sum_{n\ge 0} d_n \frac{t^n}{c_n}$ and $te^{\alpha t}\simeq h(t)=\sum_{n\ge 1} h_n \frac{t^n}{c_n}$, with $d(0)=1$, $h(0)=0$, and $h'(0)=1$, we get the traditional presentation
\begin{equation*}
r_{n,k} = \prescript{}{\omega}{(\gamma,\alpha)}_{n,k} = E\left[\frac{c_n}{c_k}\,\frac{(\gamma+k\punt\alpha)^{n-k}}{(n-k)!}\right] = c_n\left[t^n\right] d(t)\,\frac{h(t)^k}{c_k}\,.
\end{equation*}

\noindent Exponential Riordan arrays are obtained by setting  $\omega\equiv \unity$ (that is, $c_n=n!$), in which case we simply write $\prescript{}{\unity}{(\gamma,\alpha)}$ as $(\gamma,\alpha)$. Ordinary Riordan arrays are obtained by setting  $\omega\equiv\bar{\unity}$ (that is, $c_n=1$), where $\bar{\unity}$ is the boolean unity umbra whose moments are $\bar{\unity}^n\simeq n!$ (see Table~\ref{tb:keyumbrae}).  We denote by $\prescript{}{\omega}{\Rio}$  the set of all $\omega$-Riordan arrays. All known definitions and properties regarding (exponential) Riordan arrays readily extend to $\omega$-Riordan arrays. Thus, for instance, it follows that $\prescript{}{\omega}{\Rio}$ is a group and the natural $\omega$-versions of the subgroups of $\Rio$ are in turn subgroups of $\prescript{}{\omega}{\Rio}$. As a sample, we give next a statement whose straightforward umbral proof is left to the reader.
\begin{Theorem}\label{thm:omega_arrays}
Let $\omega$ be an umbra with nonzero moments and let $(\gamma,\alpha)\in \Rio$. Then
\[\prescript{}{\omega}{(\gamma,\alpha)}\,\prescript{}{\omega}{(\sigma,\rho)}=
\prescript{}{\omega}{[(\gamma,\alpha)(\sigma,\rho)]}\ \ {\text and}\ \ [\prescript{}{\omega}{(\gamma,\alpha)}]^{\inv}
=\prescript{}{\omega}{(\gamma,\alpha)}^{\inv}=\prescript{}{\omega}{(\LL_{\sst\gamma,\alpha},\LL_{\sst\alpha})}.\]
\end{Theorem}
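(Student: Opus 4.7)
The plan is to recognize each $\omega$-Riordan array as a diagonal conjugate of the corresponding exponential Riordan array, after which both assertions become purely formal. Let $D=D_\omega$ denote the infinite diagonal matrix with entries $D_{n,n}=c_n/n!$; since $\omega$ has nonzero moments we have $c_n\neq 0$ for every $n$, so $D$ is invertible in the algebra of infinite lower triangular matrices (over $R[A]$, modulo umbral equivalence). Comparing Definition~\ref{de:riordanarray} with Definition~\ref{de:genriordanarray} and using $\binom{n}{k}=\tfrac{n!}{k!(n-k)!}$, I would first verify the key entry-wise identity
\[
\prescript{}{\omega}{(\gamma,\alpha)}_{n,k}\simeq\frac{c_n/n!}{c_k/k!}\,(\gamma,\alpha)_{n,k}=D_{n,n}\,(\gamma,\alpha)_{n,k}\,(D^{-1})_{k,k},
\]
i.e.\ $\prescript{}{\omega}{(\gamma,\alpha)}=D\,(\gamma,\alpha)\,D^{-1}$. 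This is just a manipulation of factorials and is the only calculation in the proof.

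With the conjugation picture in hand, the first identity drops out by a one-line telescoping, since conjugation by a fixed invertible matrix is an automorphism of the matrix algebra:
\[
\prescript{}{\omega}{(\gamma,\alpha)}\,\prescript{}{\omega}{(\sigma,\rho)}=D(\gamma,\alpha)D^{-1}D(\sigma,\rho)D^{-1}=D\bigl[(\gamma,\alpha)(\sigma,\rho)\bigr]D^{-1}=\prescript{}{\omega}{\bigl[(\gamma,\alpha)(\sigma,\rho)\bigr]}.
\]
For the second, I would take inverses on both sides of $\prescript{}{\omega}{(\gamma,\alpha)}=D(\gamma,\alpha)D^{-1}$ to obtain $[\prescript{}{\omega}{(\gamma,\alpha)}]^{\inv}=D(\gamma,\alpha)^{\inv}D^{-1}=\prescript{}{\omega}{(\gamma,\alpha)^{\inv}}$, and then invoke the explicit inverse formula $(\gamma,\alpha)^{\inv}=(\LL_{\sst\gamma,\alpha},\LL_{\sst\alpha})$ already proved in Section~\ref{sse:riordanarray} to rewrite the right-hand side as $\prescript{}{\omega}{(\LL_{\sst\gamma,\alpha},\LL_{\sst\alpha})}$.

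I do not foresee any genuine obstacle: once the conjugation identity is observed, both claims of the theorem are automatic from the functoriality of inner automorphisms, which is consistent with the authors labelling the argument a \emph{straightforward umbral proof}. The only point that one should explicitly record is that the nonvanishing hypothesis on the moments of $\omega$ is exactly what ensures that $D$ is invertible, so that the conjugation is legitimate inside the group $\prescript{}{\omega}{\Rio}$ (and hence the latter is in fact isomorphic to $\Rio$ as groups).
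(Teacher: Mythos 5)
Your proof is correct, and it is worth noting that the paper itself supplies no argument here: the authors explicitly state that the ``straightforward umbral proof is left to the reader,'' with the intended route presumably being a direct entry-wise umbral computation in the style of the proof of Proposition~\ref{th:Riomul}, i.e.\ applying the fundamental theorem and identity \eqref{eq:der} to the $\omega$-weighted entries of Definition~\ref{de:genriordanarray}. Your route is genuinely different and cleaner: the identity $\prescript{}{\omega}{(\gamma,\alpha)}_{n,k}\simeq\frac{c_n/n!}{c_k/k!}(\gamma,\alpha)_{n,k}$ does follow at once from comparing \eqref{eq:nk-entry-umbral} with \eqref{def:gen_rio}, and since the conjugating matrix $D$ has scalar entries, conjugation commutes with the evaluation map and with matrix multiplication, so both assertions reduce to the already-established facts about $\Rio$ (Proposition~\ref{th:Riomul} and the inverse formula $(\gamma,\alpha)^{\inv}=(\LL_{\sst\gamma,\alpha},\LL_{\sst\alpha})$). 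What your approach buys is that \emph{every} group-theoretic statement about $\prescript{}{\omega}{\Rio}$ (closure, associativity, inverses, the subgroup structure, and indeed the group isomorphism $\prescript{}{\omega}{\Rio}\cong\Rio$) follows in one stroke, whereas the direct umbral computation must be repeated for each claim; the trade-off is that your argument sits outside the umbral formalism the paper is advertising. Two small points of hygiene: you should note that the conjugation takes place in the algebra of invertible lower triangular matrices rather than ``inside the group $\prescript{}{\omega}{\Rio}$'' (the diagonal matrix $D$ is not itself a normalized Riordan array), and that $\prescript{}{\omega}{[(\gamma,\alpha)(\sigma,\rho)]}$ is meaningful because Proposition~\ref{th:Riomul} exhibits the product as a Riordan array of type $(p,q)$ with $p,q\in R[A]$, to which Definition~\ref{de:genriordanarray} applies.
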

\noindent Furthermore, the fundamental theorem in the context of $\omega$-Riordan arrays is stated as follows. 
\begin{Theorem}[Fundamental Theorem of $\omega$-Riordan arrays]\label{th:omegaFTRA}
For all umbrae $\alpha, \gamma$ and $\eta$, we have
\begin{equation}\label{def:omega_action}
\omega\big[\prescript{}{\omega}{(\alpha,\gamma)}\big]\eta\equiv(\gamma,\alpha)\omega\eta
\equiv \gamma+\omega\eta\punt\bell\punt\der{\alpha}\,.
\end{equation}
\end{Theorem}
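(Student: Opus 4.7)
The plan is to unfold both sides into moments and recognize the exponential Riordan action on the umbra $\omega\eta$. Throughout, the saturated alphabet property allows me to take $\omega$ uncorrelated with $\gamma$, $\alpha$ and $\eta$. I read $\omega\bigl[\prescript{}{\omega}{(\gamma,\alpha)}\bigr]\eta$ as the umbral polynomial $\omega\cdot(\prescript{}{\omega}{(\gamma,\alpha)}\eta)$, whose $n$-th power is $\omega^n\sum_{k=0}^n \prescript{}{\omega}{(\gamma,\alpha)}_{n,k}\,\eta^k$.

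From Definition~\ref{de:genriordanarray} and the moment identity $\omega^n\simeq n!/c_n$, the key entry-by-entry computation is
\begin{equation*}
\omega^n\cdot\prescript{}{\omega}{(\gamma,\alpha)}_{n,k}
\simeq \frac{n!}{c_n}\cdot\frac{c_n}{c_k}\cdot\frac{(\gamma+k\punt\alpha)^{n-k}}{(n-k)!}
= \binom{n}{k}\,\frac{k!}{c_k}\,(\gamma+k\punt\alpha)^{n-k}
\simeq \binom{n}{k}\,\omega^k\,(\gamma+k\punt\alpha)^{n-k},
\end{equation*}
which trades the global factor $\omega^n$ for a column-by-column factor $\omega^k$ via $\omega^k\simeq k!/c_k$. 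Multiplying by $\eta^k$, summing over $k$, and noting that $\omega^k\eta^k=(\omega\eta)^k$ literally as polynomials in $R[A]$, I would obtain
\begin{equation*}
\bigl(\omega\bigl[\prescript{}{\omega}{(\gamma,\alpha)}\bigr]\eta\bigr)^n
\simeq \sum_{k=0}^n\binom{n}{k}(\gamma+k\punt\alpha)^{n-k}(\omega\eta)^k
\simeq \bigl((\gamma,\alpha)\,\omega\eta\bigr)^n,
\end{equation*}
the last equivalence being Definition~\ref{de:riordanarray} applied to $(\gamma,\alpha)$ acting on the umbra $\omega\eta$. Since this holds for every $n\ge 0$, the first equivalence $\omega\bigl[\prescript{}{\omega}{(\gamma,\alpha)}\bigr]\eta\equiv(\gamma,\alpha)\omega\eta$ follows.

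For the second equivalence, I simply invoke Theorem~\ref{th:FTRA} with $\omega\eta$ in place of $\eta$, obtaining $(\gamma,\alpha)\omega\eta\equiv\gamma+\omega\eta\punt\bell\punt\der{\alpha}$ and closing the chain.

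No serious obstacle arises; the argument is a change-of-normalization computation that moves the factor $\omega^n$ onto $\omega^k$ inside each summand using $\omega^n\simeq n!/c_n$ and $\omega^k\simeq k!/c_k$. The only point requiring care is the bookkeeping of uncorrelation: $\omega$ must be disjoint in support from $\gamma$, $k\punt\alpha$ and $\eta$ so that the evaluation factorizes as $E[\omega^m\cdot X]=E[\omega^m]\,E[X]$ where needed. In a saturated alphabet, this is automatic.
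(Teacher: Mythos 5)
Your proof is correct and follows exactly the route the paper intends (the theorem is stated without proof, as one of the ``straightforward umbral'' consequences of Definition~\ref{de:genriordanarray} left to the reader): trading the row factor $\omega^n\simeq n!/c_n$ for the column factors $\omega^k\simeq k!/c_k$ reduces the $\omega$-array acting on $\eta$ to the exponential array $(\gamma,\alpha)$ acting on $\omega\eta$, after which the second equivalence is Theorem~\ref{th:FTRA} applied to the umbral polynomial $\omega\eta$. Your explicit bookkeeping of the uncorrelation of $\omega$ with $\gamma$, $\alpha$ and $\eta$ is precisely the point that needs care, and note that the paper's $\prescript{}{\omega}{(\alpha,\gamma)}$ in the displayed statement is a typo for $\prescript{}{\omega}{(\gamma,\alpha)}$, as you implicitly assumed.
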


\noindent If we choose $\eta\equiv\unity$ in identity \eqref{def:omega_action}, we can see that the row sums of $\prescript{}{\omega}{(\gamma,\alpha)}$ are given by
\begin{equation}\label{eq:gen_rowsum}
\sum_{k=0}^n\,\prescript{}{\omega}{(\gamma,\alpha)}_{n,k}\simeq
\frac{c_n\,(\gamma+\omega\punt\beta\punt\der{\alpha})^n}{n!}\,.
\end{equation}
\noindent In the same manner as Sheffer sequences are defined from exponential Riordan arrays, we define $\omega$-Sheffer sequences out of $\omega$-Riordan arrays by
$$\entriess^\omega_n(x)=\sum_{k=0}^n\,\prescript{}{\omega}{(\gamma,\alpha)}_{n,k}\,x^k. $$
\noindent It immediately follows that for every Sheffer sequence $(\entriess_n(x))$, there exists a unique polynomial sequence $(\entriess^\omega_n(x))$ such that
\begin{equation}\label{def:gen_sheff}\omega^n\, \entriess^\omega_n(x)\simeq \entriess_n(\omega x),\quad n\geq 0.\end{equation}
\noindent Since $\entriess_n(\omega x)\simeq \big(\gamma+\omega x \punt\bell\punt\der{\alpha}\big)^n$, it follows from \eqref{def:gen_sheff} that
$\displaystyle \entriess^\omega_n(x)\simeq \frac{c_n\,(\gamma+\omega x\punt\beta\punt\der{\alpha})^n}{n!}$.
\noindent We denote by $\prescript{}{\omega}{\Sheff}$ the set of all polynomial sequences $(\entriess^\omega_n(x))$ and refer to it as the $\omega$-Sheffer set.

\medskip

In order to state recurrence relations for $\omega$-Riordan arrays, consider the following simple umbral trick: $\gamma+k\punt\alpha\equal\gamma+(k-m)\punt\alpha+m\punt\alpha$, where $k$ and $m$ are any integers. The following main result is a straightforward consequence of the umbral Abel identity \eqref{eq:abel-identity}.

\begin{Theorem}\label{th:mother-rec}
For all $n,k\in\Z$ such that $n-k\ge 0$, we have
\begin{equation*}\label{eq:keyforrec}
\begin{array}{lcl}
(\gamma+k\punt\alpha)^{n-k} &\simeq& \dsum_{i=0}^{n-k}\binom{n-k}{i}
\big(\gamma+(k-m)\punt\alpha+i\punt\lambda\big)^{n-k-i}\,\,
m\punt\alpha\big(m\punt\alpha+(-i)\punt\lambda\big)^{i-1}\\[2em]
&\simeq&(n-k)! \, \dsum_{i=0}^{n-k} \frac{\big(\gamma+(k-m)\punt\alpha+i\punt\lambda\big)^{n-k-i}}{(n-k-i)!}\,\,
\frac{m\punt\alpha\big(m\punt\alpha+(-i)\punt\lambda\big)^{i-1}}{i!}\,\,.
\end{array}
\end{equation*}
\end{Theorem}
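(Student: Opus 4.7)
The plan is to recognize the statement as nothing more than a direct application of the Umbral Abel Identity (Version I) \eqref{eq:abel-identity}, combined with the umbral trick highlighted just before the theorem, namely $\gamma + k\punt\alpha \equal \gamma + (k-m)\punt\alpha + m\punt\alpha$. So no new machinery is required; the task is purely one of making the right substitution in \eqref{eq:abel-identity} and then tidying up.

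First I would justify the trick itself. Since $k\punt\alpha$ has m.g.f.\ $f_\alpha(t)^k$ and $(k-m)\punt\alpha$ and $m\punt\alpha$ can be realized on disjoint families of similar copies of $\alpha$ (saturation of the alphabet), their uncorrelated sum has m.g.f.\ $f_\alpha(t)^{k-m}\cdot f_\alpha(t)^m=f_\alpha(t)^k$. Adjoining $\gamma$ (uncorrelated with both) shows $\gamma+k\punt\alpha\equal(\gamma+(k-m)\punt\alpha)+m\punt\alpha$, so their $(n-k)$-th moments agree.

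Next, I would feed this decomposition into the Umbral Abel Identity \eqref{eq:abel-identity} by making the replacements
\[
n\longmapsto n-k,\qquad \gamma\longmapsto \gamma+(k-m)\punt\alpha,\qquad \sigma\longmapsto m\punt\alpha,\qquad \alpha\longmapsto \lambda,
\]
where $\lambda$ is any umbra uncorrelated with the umbrae appearing in $\gamma,\alpha$. The identity then reads
\[
\big((\gamma+(k-m)\punt\alpha)+m\punt\alpha\big)^{n-k}\simeq \sum_{i=0}^{n-k}\binom{n-k}{i}\big(\gamma+(k-m)\punt\alpha+i\punt\lambda\big)^{n-k-i}\, m\punt\alpha\big(m\punt\alpha+(-i)\punt\lambda\big)^{i-1},
\]
and combining with the trick above gives the first equivalence in the statement.

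Finally, the second equivalence in the statement is a purely notational restatement: expand $\binom{n-k}{i}=\frac{(n-k)!}{i!\,(n-k-i)!}$ and distribute the factorials onto the two factors. There is no real obstacle here; the only point of care is the uncorrelation of $(k-m)\punt\alpha$, $m\punt\alpha$, and $i\punt\lambda$, which is guaranteed by the saturated-alphabet convention recalled in Section~\ref{se:prelim}, and the fact that \eqref{eq:abel-identity} is valid for arbitrary umbrae (so $\gamma$ may be replaced by any umbral polynomial, in particular $\gamma+(k-m)\punt\alpha$).
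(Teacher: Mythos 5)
Your proposal is correct and is exactly the argument the paper intends: the paper itself only remarks that the theorem is ``a straightforward consequence'' of the umbral Abel identity \eqref{eq:abel-identity} together with the decomposition $\gamma+k\punt\alpha\equal\gamma+(k-m)\punt\alpha+m\punt\alpha$, and your substitution $n\mapsto n-k$, $\gamma\mapsto\gamma+(k-m)\punt\alpha$, $\sigma\mapsto m\punt\alpha$, $\alpha\mapsto\lambda$ is precisely that. The justification of the trick via m.g.f.'s and saturation, and the expansion of $\binom{n-k}{i}$ for the second line, are both sound.
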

\noindent Recall that, on the one hand, we have $\K_{\sst m\punt\alpha,\lambda}^i\simeq m\punt\alpha\big(m\punt\alpha+(-i)\punt\lambda\big)^{i-1}$, for all $i\ge 1$. On the other hand, we also have $\K_{\sst{m\punt\alpha,\lambda}}\equal m\punt\alpha\punt\bell\punt\der{\lambda}^\invumb$. We then conclude that  $\K_{\sst{m\punt\alpha,\lambda}}\equal m\punt\K_{\sst\alpha,\lambda}$. It readily follows from Definition \ref{de:genriordanarray} and Theorem \ref{th:mother-rec}, the next key result.
\begin{Theorem}\label{th:nonrec}
Given any umbra $\lambda$ and any integers $m,n,k$, with $n\ge k$, it holds
\begin{equation}\label{eq:nonrec}
\prescript{}{\omega}{(\gamma,\alpha)}_{n,k}\simeq
\frac{c_n}{c_k}\dsum_{i=0}^{n-k} \frac{
\big(m\punt\K_{\sst\alpha,\lambda}\big)^{i}}{i!}\,\, \frac{
\big(\gamma+(k-m)\punt\alpha+i\punt\lambda\big)^{n-k-i}}{(n-k-i)!}\,.
\end{equation}
\end{Theorem}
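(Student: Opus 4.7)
\textbf{Proof plan for Theorem \ref{th:nonrec}.} The strategy is to show that this is essentially a one-line consequence of Theorem \ref{th:mother-rec} together with the umbral identification of Abel-type monomials with moments of the umbrae $\K_{\sst\sigma,\lambda}$.

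First, I would invoke Definition \ref{de:genriordanarray} to write
\[
\prescript{}{\omega}{(\gamma,\alpha)}_{n,k}\simeq \frac{c_n}{c_k}\,\frac{(\gamma+k\punt\alpha)^{n-k}}{(n-k)!}\,.
\]
Then I would apply Theorem \ref{th:mother-rec} directly to the factor $(\gamma+k\punt\alpha)^{n-k}$, with the same $\lambda$ and $m$ appearing in the target formula. This yields
\[
\frac{(\gamma+k\punt\alpha)^{n-k}}{(n-k)!} \simeq \sum_{i=0}^{n-k} \frac{m\punt\alpha\bigl(m\punt\alpha+(-i)\punt\lambda\bigr)^{i-1}}{i!}\,\frac{\bigl(\gamma+(k-m)\punt\alpha+i\punt\lambda\bigr)^{n-k-i}}{(n-k-i)!}\,,
\]
so that multiplying by $c_n/c_k$ gives the right-hand side of \eqref{eq:nonrec} up to the identification of the $i$-th factor.

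To finish, I would replace the Abel monomial $m\punt\alpha\bigl(m\punt\alpha+(-i)\punt\lambda\bigr)^{i-1}$ with $(m\punt\K_{\sst\alpha,\lambda})^i$. This is the content of the remark immediately preceding the theorem: from the definition \eqref{eq:K} applied to the umbra $m\punt\alpha$, we have $\K_{\sst m\punt\alpha,\lambda}^{\,i}\simeq m\punt\alpha\bigl(m\punt\alpha+(-i)\punt\lambda\bigr)^{i-1}$ for $i\ge 1$, and by the Lagrange inversion formula \eqref{eq:lagrange} one has $\K_{\sst m\punt\alpha,\lambda}\equiv m\punt\alpha\punt\bell\punt\der{\lambda}^{\invumb}\equiv m\punt\K_{\sst\alpha,\lambda}$. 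Combining these two equivalences gives $(m\punt\K_{\sst\alpha,\lambda})^i\simeq m\punt\alpha\bigl(m\punt\alpha+(-i)\punt\lambda\bigr)^{i-1}$, and substitution completes the proof.

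I do not anticipate a genuine obstacle here, since all the heavy lifting has been done in Theorem \ref{th:mother-rec} (itself a direct consequence of the umbral Abel identity). The only minor care point is the $i=0$ term, where one must use the standard umbral convention $\K_{\sst\sigma,\lambda}^{\,0}\simeq 1$, consistent with the convention built into \eqref{eq:abel-identity} and \eqref{eq:K}. Once this is acknowledged, the three displayed equivalences chain together to produce \eqref{eq:nonrec} verbatim.
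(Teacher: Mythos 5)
Your proposal matches the paper's own derivation exactly: the paper likewise obtains Theorem \ref{th:nonrec} by combining Definition \ref{de:genriordanarray} with Theorem \ref{th:mother-rec} and then identifying the Abel monomial $m\punt\alpha\big(m\punt\alpha+(-i)\punt\lambda\big)^{i-1}$ with $\big(m\punt\K_{\sst\alpha,\lambda}\big)^{i}$ via $\K_{\sst m\punt\alpha,\lambda}\equal m\punt\K_{\sst\alpha,\lambda}$. No gaps; your extra remark about the $i=0$ convention is a harmless clarification.
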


\noindent Identity \eqref{eq:nonrec} is equivalent to 

\begin{equation}\label{eq:nonrec-lambda}
\prescript{}{\omega}{(\gamma,\alpha)}_{n,k}\simeq \frac{c_n}{c_k\,c_{n-k}}
\dsum_{i=0}^{n-k} c_i \,\frac{
\big(m\punt\K_{\sst\alpha,\lambda}\big)^{i}}{i!}
\,\, \prescript{}{\omega}{(\gamma+(k-m)\punt\alpha,\lambda)}_{n-k,i}\,.
\end{equation}

\noindent In particular, the $(n,k)$-entry of any exponential Riordan array is given by
\begin{equation}\label{eq:nonrec-exp}
(\gamma,\alpha)_{n,k}\simeq\binom{n}{k}\dsum_{i=0}^{n-k} \binom{n-k}{i}(m\punt\K_{\sst\alpha,\lambda})^i \, \big(\gamma+(k-m)\punt\alpha+i\punt\lambda\big)^{n-k-i}\,.
\end{equation}

\begin{Remark} Theorem \ref{th:nonrec} is important not only for the recurrence relations (Theorems \ref{th:rechor}, \ref{th:recver} and \ref{th:recdiff}) that we shall see are derived from it, but also because it provides a way of expressing  the $(n,k)$-entry of any $\omega$-Riordan array as a special weighted sum of entries of a related $\omega$-Riordan array parametrized by $\lambda\in A$. Formula~\eqref{eq:nonrec-lambda} is, to the best of our knowledge, new. Remarkable combinatorial identities as the ones shown in Examples \ref{ex:stirling-bernoulli-cauchy} and \ref{ex:stirling-bernoulli} are direct  consequences of this formula. 
\end{Remark}

\begin{Example}\label{ex:stirling-bernoulli-cauchy}
Consider the Stirling array of the second kind $\StirlingII=(\augmentation,-1\punt\bernoulli)$ and set $\lambda\equal-1\punt\bernoulli$ in \eqref{eq:nonrec-exp}. Recall that $\K_{\sst-1\punt\bernoulli,-1\punt\bernoulli}\equal-1\punt\bernoulli\punt\chi$.  Hence, for any $n\ge k\ge 0$ and $m\in\Z$, we have
\begin{equation*}
S(n,k)\simeq\binom{n}{k}\dsum_{i=0}^{n-k} \binom{n-k}{i} \big((-m)\punt\bernoulli\punt\chi\big)^i \, \big((m-k-i)\punt\bernoulli\big)^{n-k-i}\, .
\end{equation*}
Since $\big((-m)\punt\bernoulli\punt\chi\big)^i \simeq \mathscr{C}_i^{\sst(m)}$  and $\big((m-k-i)\punt\bernoulli\big)^{n-k-i}\simeq b_{n-k-i}^{\sst(m-k-i)}$ (these are respectively the generalized Cauchy and Bernoulli numbers that were used in Section \ref{se:examples}), we get the following combinatorial formula
\begin{equation*}
S(n,k) = \binom{n}{k}\dsum_{i=0}^{n-k} \binom{n-k}{i} \mathscr{C}_i^{\sst(m)} \, b_{n-k-i}^{\sst(m-k-i)} \,.
\end{equation*}
\end{Example}

\noindent Now, set $\lambda\equal\augmentation$ in \eqref{eq:nonrec-lambda}. We have $\der{\augmentation}\equal\singleton$ and  $\singleton^\invumb\equal\singleton$; hence $\K_{\sst\alpha,\augmentation}\equal\alpha$. The next identity follows.

\begin{Corollary}\label{co:nonrec}
The entries of any $\omega$-Riordan array satisfy
\begin{equation}\label{eq:nonrec-aug}
\prescript{}{\omega}{(\gamma,\alpha)}_{n,k}\simeq \frac{c_n}{c_k\,c_{n-k}}
\dsum_{i=0}^{n-k} c_i \,\frac{
\big(m\punt\alpha\big)^{i}}{i!}\,\, \prescript{}{\omega}{(\gamma+(k-m)\punt\alpha,\augmentation)}_{n-k,i}\,.
\end{equation}

\noindent In traditional terms, formula \eqref{eq:nonrec-aug} is equivalent to
\begin{equation*}
r_{n,k}=\frac{c_n}{c_k\,c_{n-k}}\dsum_{i=0}^{n-k} c_i\frac{a_i^{\sst(k)}}{i!}\,u_{n-k,i}\,\,\,,
\end{equation*}
where $(m\punt\alpha)^i\simeq a_i^{\sst(m)}=i![t^i]\big(f_\alpha(t)\big)^m$ and
\begin{equation*}
\begin{array}{lclcl}
\prescript{}{\omega}{(\gamma,\alpha)}_{n,k} &=& r_{n,k} &=& c_n[t^n]f_\gamma(t)\frac{(tf_\alpha(t))^k}{c_k}\quad\text{and}\\[1em]
\prescript{}{\omega}{(\gamma+(k-m)\punt\alpha,\augmentation)}_{n-k,i} &=& u_{n-k,i} &=& c_{n-k}[t^{n-k}]f_\gamma(t)f_\alpha(t)^{k-m}\frac{t^i}{c_i}\,.
\end{array}
\end{equation*}

\noindent Moreover, setting $m=k\ge 0$ in \eqref{eq:nonrec-aug}, we obtain
\begin{equation*}
\prescript{}{\omega}{(\gamma,\alpha)}_{n,k}\simeq \frac{c_n}{c_k\,c_{n-k}}
\dsum_{i=0}^{n-k} c_i \,\frac{
\big(k\punt\alpha\big)^{i}}{i!}\,\, \prescript{}{\omega}{(\gamma,\augmentation)}_{n-k,i}\,.
\end{equation*}
\end{Corollary}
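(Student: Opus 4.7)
The approach is to specialize Theorem \ref{th:nonrec} by taking $\lambda\equiv\augmentation$. Once this is done, the entire content of the corollary reduces to simplifying the umbra $\K_{\sst\alpha,\augmentation}$ and then translating the resulting umbral identity into classical notation.

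The key preliminary step is the identification $\K_{\sst\alpha,\augmentation}\equiv\alpha$. I would establish this directly from definition \eqref{eq:K}: since the m.g.f.\ of $\augmentation$ is $1$, the umbra $(-n)\punt\augmentation$ is again the augmentation, so $\alpha+(-n)\punt\augmentation\equal\alpha$, and \eqref{eq:K} collapses to $\K_{\sst\alpha,\augmentation}^n\simeq\alpha\cdot\alpha^{n-1}=\alpha^n$ for every $n\geq 1$. Alternatively, one could appeal to the umbral Lagrange inversion \eqref{eq:lagrange} together with $\der{\augmentation}\equiv\singleton$, which acts as the compositional identity. In either case, $m\punt\K_{\sst\alpha,\augmentation}\equiv m\punt\alpha$ for every integer $m$, and substituting into \eqref{eq:nonrec-lambda} yields \eqref{eq:nonrec-aug} at once.

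For the translation into traditional notation I would apply the evaluation map $E$ and invoke Definition \ref{de:genriordanarray}: the moment $(m\punt\alpha)^i$ becomes $a_i^{(m)}=i![t^i](f_\alpha(t))^m$, while $\prescript{}{\omega}{(\gamma,\alpha)}_{n,k}$ and $\prescript{}{\omega}{(\gamma+(k-m)\punt\alpha,\augmentation)}_{n-k,i}$ unfold directly into coefficient extractions from the corresponding $\omega$-type generating functions, with the factorization $f_{\gamma+(k-m)\punt\alpha}(t)=f_\gamma(t)(f_\alpha(t))^{k-m}$ coming from uncorrelation in the saturated alphabet. The final specialization $m=k$ then follows on noting that $(k-k)\punt\alpha\equiv\augmentation$, so $\gamma+(k-k)\punt\alpha\equal\gamma$, which simplifies the second factor on the right-hand side to $\prescript{}{\omega}{(\gamma,\augmentation)}_{n-k,i}$.

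I do not anticipate any genuine obstacle: the statement is a direct specialization of Theorem \ref{th:nonrec}, and the only substantive step is the one-line identification $\K_{\sst\alpha,\augmentation}\equiv\alpha$. The rest is routine umbral bookkeeping combined with the standard dictionary between the umbral equivalence $\simeq$ and the classical coefficient operator $[t^n]$.
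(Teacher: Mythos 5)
Your proposal is correct and follows essentially the same route as the paper: the paper also obtains the corollary by setting $\lambda\equal\augmentation$ in \eqref{eq:nonrec-lambda} and reducing $\K_{\sst\alpha,\augmentation}\equal\alpha$ via $\der{\augmentation}\equal\singleton$ and $\singleton^\invumb\equal\singleton$. Your additional direct verification of $\K_{\sst\alpha,\augmentation}\equiv\alpha$ from the moment definition \eqref{eq:K} is a valid (and slightly more elementary) alternative to the paper's appeal to the Lagrange-inversion form, but it does not change the structure of the argument.
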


\begin{Example}\label{ex:stirling-bernoulli} Consider again $\StirlingII=(\augmentation,-1\punt\bernoulli)$ and formula \eqref{eq:nonrec-aug}. For any $n\ge k\ge 0$ and $m\in\Z$, we have 
\begin{equation*}
S(n,k)\simeq\binom{n}{k}\dsum_{i=0}^{n-k} \binom{n-k}{i} \big((-m)\punt\bernoulli\big)^i \, \big((m-k)\punt\bernoulli\big)^{n-k-i}\simeq \binom{n}{k}\big((-k)\punt\bernoulli\big)^{n-k}\simeq\binom{n}{k} b_{n-k}^{\sst(-k)}\, .
\end{equation*}
\noindent In traditional terms, the identity above is written as (see \cite[Proposition 9.1]{RT:classicalumbral})
\begin{equation*}
S(n,k)=\binom{n}{k}  b_{n-k}^{\sst(-k)}\,.
\end{equation*}
\end{Example}

By specializing $\lambda\equal\alpha$ in identity~\eqref{eq:nonrec-lambda} and setting $m=k\ge 0$, the following linear \emph{horizontal} recurrence relation for $\omega$-Riordan arrays is obtained.

\begin{Corollary}\label{co:rec}
Any $\omega$-Riordan array satisfy the following recurrence relation,
\begin{equation*}
\prescript{}{\omega}{(\gamma,\alpha)}_{n,k} \simeq
\frac{c_n}{c_k\,c_{n-k}}\dsum_{i=0}^{n-k} c_i \,\frac{
\big(k\punt\K_{\sst\alpha}\big)^{i}}{i!}
\,\, \prescript{}{\omega}{(\gamma,\alpha)}_{n-k,i}\quad,\quad n\ge k\ge 0.
\end{equation*}
\end{Corollary}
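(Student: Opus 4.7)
The plan is to specialize the master non-recursive formula \eqref{eq:nonrec-lambda} of Theorem \ref{th:nonrec}, which contains two free parameters $\lambda$ (an umbra) and $m$ (an integer), and to show that the two choices $\lambda\equiv\alpha$ and $m=k$ collapse it immediately onto the claimed recurrence. No separate identities are needed; everything is bookkeeping inside \eqref{eq:nonrec-lambda}.

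First I would set $\lambda\equiv\alpha$. Recalling the convention $\K_{\sst\alpha}:=\K_{\sst\alpha,\alpha}$ introduced right before \eqref{eq:LL}, the factor $\big(m\punt\K_{\sst\alpha,\lambda}\big)^i$ in \eqref{eq:nonrec-lambda} becomes $\big(m\punt\K_{\sst\alpha}\big)^i$, and the $\omega$-Riordan array on the right is now $\prescript{}{\omega}{(\gamma+(k-m)\punt\alpha,\alpha)}_{n-k,i}$, which is an entry of the \emph{same} array $\prescript{}{\omega}{(\gamma,\alpha)}$ up to a shift of the $\gamma$-component. This is the reason the specialization produces a recurrence rather than a cross-family identity: the $\alpha$-component is preserved.

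Second, I would set $m=k$. Then $(k-m)\punt\alpha=0\punt\alpha$, and the umbral definition $e^{(x\punt\alpha)t}\simeq (f_\alpha(t))^x$ gives $f_{0\punt\alpha}(t)=1$, so $0\punt\alpha\equiv\augmentation$. Combining this with the example identity $\gamma+\augmentation\equal\gamma$, the shifted generator $\gamma+(k-m)\punt\alpha$ is similar to $\gamma$, and since Definition~\ref{de:genriordanarray} defines an $\omega$-Riordan array purely in terms of the moments of its generating umbrae (i.e.\ only up to $\equal$), we may replace $\prescript{}{\omega}{(\gamma+0\punt\alpha,\alpha)}_{n-k,i}$ by $\prescript{}{\omega}{(\gamma,\alpha)}_{n-k,i}$. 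Simultaneously, $\big(m\punt\K_{\sst\alpha}\big)^i$ becomes $\big(k\punt\K_{\sst\alpha}\big)^i$. Substituting both specializations into \eqref{eq:nonrec-lambda} yields verbatim the statement of the corollary.

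There is no real obstacle; the only point worth a moment's attention is the careful handling of the degenerate umbra $0\punt\alpha$, where one must pass through generating functions to see that $0\punt\alpha\equiv\augmentation$ and then invoke $\gamma+\augmentation\equal\gamma$ to absorb it into $\gamma$ at the level of $\equal$, which is precisely the level at which the entries of $\prescript{}{\omega}{(\gamma,\alpha)}$ are specified by \eqref{def:gen_rio}.
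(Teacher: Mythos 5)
Your proposal is correct and follows exactly the paper's own route: the corollary is obtained by specializing $\lambda\equal\alpha$ and $m=k$ in identity \eqref{eq:nonrec-lambda}, which is precisely what the paper states (without further elaboration). Your extra care with $0\punt\alpha\equiv\augmentation$ and $\gamma+\augmentation\equal\gamma$ is a correct filling-in of a detail the paper leaves implicit.
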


\noindent Furthermore, by setting $\lambda\equal\alpha$ in identity~\eqref{eq:nonrec}, Corollary \ref{co:rec} is obtained as a particular case of a more general horizontal recurrence relation for $\omega$-Riordan arrays.

\begin{Theorem}\label{th:rechor}
For any integer $m$ such that $n\ge m$, it holds
\begin{equation*}
\prescript{}{\omega}{(\gamma,\alpha)}_{n,k}\simeq \frac{c_n}{c_k\,c_{n-m}}
\dsum_{i=0}^{n-k} c_{k-m+i} \,\frac{
\big(m\punt\K_{\sst\alpha}\big)^{i}}{i!}\,\, \prescript{}{\omega}{(\gamma,\alpha)}_{n-m,k-m+i}\,.
\end{equation*}
\end{Theorem}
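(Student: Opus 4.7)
The plan is to derive Theorem \ref{th:rechor} as a direct specialization of the non-recursive formula \eqref{eq:nonrec} of Theorem \ref{th:nonrec}, just as the preceding text suggests. The whole argument is essentially a rewriting: choose the free umbra $\lambda$ so that the polynomial factor in \eqref{eq:nonrec} is itself an entry of the same $\omega$-Riordan array, and then read off the claimed identity.

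First, I would specialize $\lambda\equiv\alpha$ in \eqref{eq:nonrec}. This uses two simple umbral facts. One, by the very definition of $\K_{\sst\alpha,\lambda}$, we have $\K_{\sst\alpha,\alpha}\equiv\K_{\sst\alpha}$, so $m\punt\K_{\sst\alpha,\lambda}$ becomes $m\punt\K_{\sst\alpha}$. Two, since the dot product distributes over sums of integer copies (up to similarity, as in the examples following Remark \ref{re:momentsofdotproduct}), $(k-m)\punt\alpha+i\punt\alpha\equal(k-m+i)\punt\alpha$. After these substitutions, \eqref{eq:nonrec} becomes
\begin{equation*}
\prescript{}{\omega}{(\gamma,\alpha)}_{n,k}\simeq \frac{c_n}{c_k}\sum_{i=0}^{n-k}\frac{(m\punt\K_{\sst\alpha})^{i}}{i!}\,\frac{\bigl(\gamma+(k-m+i)\punt\alpha\bigr)^{n-k-i}}{(n-k-i)!}\,.
\end{equation*}

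Next, I would recognize the polynomial factor on the right. By Definition \ref{de:genriordanarray} applied with row index $n-m$ and column index $k-m+i$, we have
\begin{equation*}
\prescript{}{\omega}{(\gamma,\alpha)}_{n-m,\,k-m+i}\simeq\frac{c_{n-m}}{c_{k-m+i}}\,\frac{\bigl(\gamma+(k-m+i)\punt\alpha\bigr)^{(n-m)-(k-m+i)}}{((n-m)-(k-m+i))!}\,,
\end{equation*}
and since $(n-m)-(k-m+i)=n-k-i$, solving for the umbral polynomial $\bigl(\gamma+(k-m+i)\punt\alpha\bigr)^{n-k-i}/(n-k-i)!$ and substituting it back into the previous display yields exactly the desired
\begin{equation*}
\prescript{}{\omega}{(\gamma,\alpha)}_{n,k}\simeq \frac{c_n}{c_k\,c_{n-m}}\sum_{i=0}^{n-k} c_{k-m+i}\,\frac{(m\punt\K_{\sst\alpha})^{i}}{i!}\,\prescript{}{\omega}{(\gamma,\alpha)}_{n-m,\,k-m+i}\,.
\end{equation*}

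There is no real obstacle here; the main thing to watch is bookkeeping with the indices. The hypothesis $n\ge m$ ensures the row index $n-m$ is non-negative, and the summation range $0\le i\le n-k$ guarantees $k-m+i\le n-m$, so the entries on the right-hand side sit in the valid triangular region (with the usual convention that entries with negative column indices vanish, should $k<m$). Corollary \ref{co:rec} then falls out immediately by further setting $m=k$, consistent with the remark made just before the statement.
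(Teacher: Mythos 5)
Your proposal is correct and follows essentially the same route as the paper: specialize $\lambda\equiv\alpha$ in \eqref{eq:nonrec} (so that $\K_{\sst\alpha,\lambda}$ becomes $\K_{\sst\alpha}$ and $(k-m)\punt\alpha+i\punt\alpha\equal(k-m+i)\punt\alpha$), then rewrite $n-k-i=(n-m)-(k-m+i)$ to recognize the factor as $\frac{c_{k-m+i}}{c_{n-m}}\,\prescript{}{\omega}{(\gamma,\alpha)}_{n-m,k-m+i}$ via Definition \ref{de:genriordanarray}. The index bookkeeping you add matches the paper's (one-line) argument exactly.
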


\begin{proof}
We can write $n-k-i=(n-m)-(k-m+i)$ so that
\begin{equation*}
\begin{array}{lcl}
\prescript{}{\omega}{(\gamma,\alpha)}_{n,k} &\simeq&
\dfrac{c_n}{c_k}\dsum_{i=0}^{n-k} \frac{
\big(m\punt\K_{\sst\alpha}\big)^{i}}{i!}\,\, \frac{
\big(\gamma+(k-m+i)\punt\alpha\big)^{(n-m)-(k-m+i)}}{\big((n-m)-(k-m+i)\big)!}\\[2em]
&\simeq& \dfrac{c_n}{c_k}\dsum_{i=0}^{n-k} \frac{
\big(m\punt\K_{\sst\alpha}\big)^{i}}{i!}\,\,\frac{c_{k-m+i}}{c_{n-m}}
\,\,\prescript{}{\omega}{(\gamma,\alpha)}_{n-m,k-m+i}\,\,.
\end{array}
\end{equation*}
\end{proof}

\begin{Remark} Theorem \ref{th:rechor} extends identities \eqref{eq:rowrec} and \eqref{eq:rowrec2} of Theorem \ref{th:recursions1} to generalized Riordan arrays. Setting $m=1$ and applying the the linear functional $E$ in Theorem \ref{th:rechor}, we recover a shifted version of the recurrence relation stated in \cite[Theorem 5.2]{WW}. Moreover, by setting $\omega\equal\bar{\unity}$, we also recover the shifted horizontal recurrence relation of \cite[see Theorem 3.1 and Formula (3.3) therein]{LMMS:identities}. More explicitly, we have $\frac{(m\punt\K_{\sst\alpha})^i}{i!}\simeq\frac{a_i^{\sst(m)}}{i!}$, where $a_i^{\sst(m)}=i![t^i]\big(f_\alpha\big((tf_\alpha(t))^\invumb\big)\big)^m$. The sequence $\nicefrac{a_i^{\sst(m)}}{i!}$ is a generalization of the $A$-sequence of (ordinary) Riordan arrays.
\end{Remark}

Now, setting $\lambda\equal\augmentation$ in identity~\eqref{eq:nonrec}, we obtain the following \emph{vertical} recurrence relation for $\omega$-Riordan arrays.

\begin{Theorem}\label{th:recver}
For any integer $m$ such that $k\ge m$, it holds
\begin{equation*}
\prescript{}{\omega}{(\gamma,\alpha)}_{n,k}\simeq \frac{c_n\,c_{k-m}}{c_k}
\,\dsum_{i=0}^{n-k} \frac{1}{c_{n-m-i}}\,
\frac{\big(m\punt\alpha\big)^{i}}{i!}\,\, \prescript{}{\omega}{(\gamma,\alpha)}_{n-m-i,k-m}\,.
\end{equation*}
\end{Theorem}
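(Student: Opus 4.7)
The plan is to follow exactly the same strategy that was used to prove Theorem~\ref{th:rechor}, but starting from a different specialization of the free umbra $\lambda$ appearing in the non-recursive formula \eqref{eq:nonrec}. Whereas Theorem~\ref{th:rechor} came from the choice $\lambda\equal\alpha$, the vertical recurrence should fall out of the choice $\lambda\equal\augmentation$.

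First I would substitute $\lambda\equal\augmentation$ into identity~\eqref{eq:nonrec}. As noted in the paragraph preceding Corollary~\ref{co:nonrec}, one has $\der{\augmentation}\equal\singleton$ and $\singleton^{\invumb}\equal\singleton$, so that $\K_{\sst\alpha,\augmentation}\equal\alpha$, which collapses the inner factor $\bigl(m\punt\K_{\sst\alpha,\lambda}\bigr)^{i}$ to $(m\punt\alpha)^{i}$. Moreover, since $i\punt\augmentation\equiv\augmentation$ and $\augmentation$ acts as a neutral element for the umbral addition (up to similarity), the affine-like summand simplifies to $\gamma+(k-m)\punt\alpha+i\punt\augmentation\equal\gamma+(k-m)\punt\alpha$, independent of $i$ in the umbral part while retaining the exponent $n-k-i$. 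The specialized form of \eqref{eq:nonrec} therefore reads
\[
\prescript{}{\omega}{(\gamma,\alpha)}_{n,k}\simeq\frac{c_n}{c_k}\sum_{i=0}^{n-k}\frac{(m\punt\alpha)^{i}}{i!}\,\frac{\bigl(\gamma+(k-m)\punt\alpha\bigr)^{n-k-i}}{(n-k-i)!}.
\]

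Next I would rewrite the exponent as $n-k-i=(n-m-i)-(k-m)$, which is precisely the form required for applying Definition~\ref{de:genriordanarray} to a smaller $\omega$-Riordan entry. Indeed, that definition gives
\[
\prescript{}{\omega}{(\gamma,\alpha)}_{n-m-i,\,k-m}\simeq\frac{c_{n-m-i}}{c_{k-m}}\,\frac{\bigl(\gamma+(k-m)\punt\alpha\bigr)^{(n-m-i)-(k-m)}}{((n-m-i)-(k-m))!},
\]
and solving for the polynomial factor and substituting back produces
\[
\prescript{}{\omega}{(\gamma,\alpha)}_{n,k}\simeq\frac{c_n}{c_k}\sum_{i=0}^{n-k}\frac{(m\punt\alpha)^{i}}{i!}\cdot\frac{c_{k-m}}{c_{n-m-i}}\,\prescript{}{\omega}{(\gamma,\alpha)}_{n-m-i,\,k-m}.
\]
Pulling the constant factor $c_n\,c_{k-m}/c_k$ outside the sum yields the stated identity.

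The only point requiring care, rather than being a genuine obstacle, is bookkeeping of the combinatorial factors $c_{\bullet}$ and verifying that all indices remain admissible: the hypothesis $k\ge m$ guarantees $k-m\ge 0$, and for $0\le i\le n-k$ one has $n-m-i\ge k-m\ge 0$, so each entry $\prescript{}{\omega}{(\gamma,\alpha)}_{n-m-i,\,k-m}$ is a legitimate entry on or below the main diagonal. Once the substitution of $\lambda\equal\augmentation$ and the two simplifications $\K_{\sst\alpha,\augmentation}\equal\alpha$ and $i\punt\augmentation\equiv\augmentation$ are in place, the argument reduces to the same kind of exponent-and-factor rearrangement carried out in the proof of Theorem~\ref{th:rechor}.
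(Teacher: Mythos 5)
Your proposal is correct and follows essentially the same route as the paper: specialize $\lambda\equal\augmentation$ in \eqref{eq:nonrec} (so that $\K_{\sst\alpha,\augmentation}\equal\alpha$ and the $i\punt\lambda$ term disappears), then rewrite the exponent as $n-k-i=(n-m-i)-(k-m)$ and recognize the resulting factors as the entries $\prescript{}{\omega}{(\gamma,\alpha)}_{n-m-i,k-m}$. The index bookkeeping you add is a sensible extra check and does not change the argument.
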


\begin{proof}
Simply write $n-k-i=(n-m-i)-(k-m)$ so that
\begin{equation*}
\begin{array}{lcl}
\prescript{}{\omega}{(\gamma,\alpha)}_{n,k} &\simeq&
\dfrac{c_n}{c_k}\dsum_{i=0}^{n-k} \frac{
\big(m\punt\alpha\big)^{i}}{i!}\,\, \frac{
\big(\gamma+(k-m)\punt\alpha\big)^{(n-m-i)-(k-m)}}{\big((n-m-i)-(k-m)\big)!}\\[2em]
&\simeq& \dfrac{c_n\,c_{k-m}}{c_k}\dsum_{i=0}^{n-k} \frac{
\big(m\punt\alpha\big)^{i}}{i!}\,\,\frac{1}{c_{n-m-i}}
\,\,\prescript{}{\omega}{(\gamma,\alpha)}_{n-m-i,k-m}\,\,.
\end{array}
\end{equation*}
\end{proof}

\begin{Remark} Theorem \ref{th:recver} extends identity \eqref{eq:colrec} of Theorem \ref{th:recursions1} to generalized Riordan arrays. It also extends the vertical recurrence relation of ordinary Riordan arrays stated in \cite[see Theorem 3.2 and the shifted version (3.3) therein]{LMMS:identities}. Note also that, by setting $m=1$ and applying the evaluation map $E$ in Theorem \ref{th:recver}, we obtain a different identity compared with the recurrence relation for generalized Riordan arrays stated in \cite[Theorem 5.5]{WW}.
\end{Remark}

As a further example of the power of the umbral symbolic approach, consider  $\lambda\equal-1\punt\alpha$ in identity \eqref{eq:nonrec}. We then obtain the following novel recurrence relation for $\omega$-Riordan arrays.

\begin{Theorem}\label{th:recdiff}
For any integer $m$ such that $2k-n\ge m$, it holds
\begin{equation*}
\prescript{}{\omega}{(\gamma,\alpha)}_{n,k}\simeq \frac{c_n}{c_k}
\,\dsum_{i=0}^{n-k} \frac{c_{k-m-i}}{c_{n-m-2i}}\,
\frac{\big(-m\punt\K_{\sst-1\punt\alpha}\big)^{i}}{i!}\,\, \prescript{}{\omega}{(\gamma,\alpha)}_{n-m-2i,k-m-i}\,.
\end{equation*}
\end{Theorem}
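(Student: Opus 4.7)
The proof plan is to specialize the master formula~\eqref{eq:nonrec} of Theorem~\ref{th:nonrec} at $\lambda\equal-1\punt\alpha$ and then reshape the result, in analogy with the proofs of Theorems~\ref{th:rechor} and~\ref{th:recver}. The only genuinely new ingredient will be identifying the $\K$-umbra produced by the specialization with the $\K_{\sst-1\punt\alpha}$ appearing in the statement.

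First I would set $\lambda\equal-1\punt\alpha$ in~\eqref{eq:nonrec}. Since $f_{\alpha}(t)^{k-m}\,f_{\alpha}(t)^{-i}=f_{\alpha}(t)^{k-m-i}$, one has $(k-m)\punt\alpha+i\punt(-1\punt\alpha)\equal(k-m-i)\punt\alpha$, so the argument of the polynomial factor collapses to $\gamma+(k-m-i)\punt\alpha$. Rewriting $n-k-i=(n-m-2i)-(k-m-i)$ and invoking Definition~\ref{de:genriordanarray},
$$
\frac{(\gamma+(k-m-i)\punt\alpha)^{n-k-i}}{(n-k-i)!}\simeq\frac{c_{k-m-i}}{c_{n-m-2i}}\,\prescript{}{\omega}{(\gamma,\alpha)}_{n-m-2i,\,k-m-i}\,.
$$
The hypothesis $2k-n\ge m$ is precisely what makes both $k-m-i\ge 0$ and $n-m-2i\ge 0$ for every $i\in\{0,1,\ldots,n-k\}$, so all these entries are well defined.

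The remaining step is to show that $m\punt\K_{\sst\alpha,-1\punt\alpha}\equal-m\punt\K_{\sst-1\punt\alpha}$. Using $\K_{\sst\sigma,\mu}\equiv\sigma\punt\bell\punt\der{\mu}^\invumb$, put $X\equiv\der{(-1\punt\alpha)}^\invumb$; then $\K_{\sst\alpha,-1\punt\alpha}\equiv\alpha\punt\bell\punt X$ and $\K_{\sst-1\punt\alpha}\equiv(-1\punt\alpha)\punt\bell\punt X$. A direct generating-function computation using $f_{\gamma\punt\bell\punt\mu}(t)=f_\gamma(f_\mu(t)-1)$ and $f_{-1\punt\alpha}(s)=1/f_\alpha(s)$ gives
$$
f_{\K_{\sst-1\punt\alpha}}(t)=f_{-1\punt\alpha}(f_X(t)-1)=\frac{1}{f_{\alpha}(f_X(t)-1)}=\frac{1}{f_{\K_{\sst\alpha,-1\punt\alpha}}(t)},
$$
which shows $\K_{\sst\alpha,-1\punt\alpha}\equal-1\punt\K_{\sst-1\punt\alpha}$, and hence $m\punt\K_{\sst\alpha,-1\punt\alpha}\equal-m\punt\K_{\sst-1\punt\alpha}$ (both sides have generating function $f_{\alpha}(f_X(t)-1)^m$). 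In particular their $i$-th moments coincide, so the replacement is legal inside $\simeq$.

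Assembling the three steps yields the claimed identity. The main obstacle I expect is the identification of the two $\K$-umbrae: neither is among the standard forms (such as $\K_{\sst\alpha,\alpha}$, $\K_{\sst\alpha,\augmentation}$) previously tabulated, so one must perform the short g.f.\ calculation above to justify the passage from $\K_{\sst\alpha,-1\punt\alpha}$ to $\K_{\sst-1\punt\alpha}$ with the sign flip. Everything else is index bookkeeping that exactly parallels the arguments given for Theorems~\ref{th:rechor} and~\ref{th:recver}.
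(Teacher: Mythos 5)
Your proposal is correct and follows essentially the same route as the paper's own proof: specialize $\lambda\equal-1\punt\alpha$ in \eqref{eq:nonrec}, rewrite $n-k-i=(n-m-2i)-(k-m-i)$, check that $2k-n\ge m$ keeps the indices in the lower triangle, and identify $m\punt\K_{\sst\alpha,-1\punt\alpha}\equal-m\punt\K_{\sst-1\punt\alpha}$. The only difference is that the paper merely asserts this last similarity, whereas you supply the short generating-function verification via $\K_{\sst\sigma,\mu}\equiv\sigma\punt\bell\punt\der{\mu}^\invumb$ and $f_{-1\punt\alpha}=1/f_\alpha$ --- a welcome addition, not a deviation.
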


\begin{proof}
The equivalence follows by similar considerations as those used in the proofs of Theorem \ref{th:rechor} and Theorem \ref{th:recver}, noting that we can write $n-k-i=(n-m-2i)-(k-m-i)$ and $-m\punt\K_{\sst-1\punt\alpha}\equal m\punt\K_{\sst\alpha,-1\punt\alpha}$. The condition $2k-n\ge m$ is required to guarantee that $k-m-i\ge 0$ and $n-m-2i\ge 0$, in order to consider the corresponding lower triangular entries $\prescript{}{\omega}{(\gamma,\alpha)}_{n-m-2i,k-m-i}$, for $0\le i \le n-k$.
\end{proof}

In classical terms, the umbral identity given in Theorem \ref{th:recdiff} is written as
\begin{equation}\label{eq:recdiff-classical}
r_{n,k}=\frac{c_n}{c_k}\dsum_{i=0}^{n-k}\frac{c_{k-m-i}}{c_{n-m-2i}}\frac{a_i^{\sst(m)}}{i!} \, 
r_{n-m-2i,k-m-i}\,\,,
\end{equation}
where $\big(-m\punt\K_{\sst-1\punt\alpha}\big)^i\simeq a_i^{\sst(m)}=i! [t^i] \big(f_\alpha\big(\,\,(\nicefrac{t}{f_\alpha(t)})^\invumb\big)\big)^m$.

\begin{Example}
As an ordinary Riordan array, the Pascal array $\Pascal$ is written as $\prescript{}{\bar{\unity}}{(\bar{\unity},\bar{\unity})}$, with $c_n=1$ for all $n\ge 0$. Indeed, $\der{\bar{\unity}}\equal\bar{\unity}$ and $\K_{\bar{\unity}}\equal\bar{\unity}\punt\bell\punt\der{\bar{\unity}}^\invumb\equal\bar{\unity}\punt\bell\punt\bar{\unity}^\invumb\equal\singleton$, and then it follows from \eqref{eq:lagrangek2} that 
\begin{equation*}
\big((k+1)\punt\bar{\unity}\big)^{n-k}\simeq
\frac{k+1}{n+1}\big((n+1)\punt\singleton\big)^{n-k}\simeq\frac{k+1}{n+1}(n+1)_{n-k}\quad.
\end{equation*}
Thus, we obtain
\[\prescript{}{\bar{\unity}}{(\bar{\unity},\bar{\unity})}_{n,k}\simeq \frac{(\bar{\unity}+k\punt\bar{\unity})^{n-k}}{(n-k)!}\simeq\frac{\big((1+k)\punt\bar{\unity}\big)^{n-k}}{(n-k)!} \simeq\frac{k+1}{n+1}\frac{(n+1)_{n-k}}{(n-k)!} = \binom{n}{k}.\]

\noindent Now, the g.f. of the umbra $\bar{\unity}$ is $f_{\bar{\unity}}(t)=\frac{1}{1-t}$, so that $\frac{t}{f_{\bar{\unity}}(t)}=t-t^2$ and $\big(\frac{t}{f_{\bar{\unity}}(t)}\big)^\invumb=\frac{1-\sqrt{1-4t}}{2}$. Therefore, we have 
\begin{equation*}
f_{\bar{\unity}}\left(\,\,\left(\frac{t}{f_{\bar{\unity}}(t)}\right)^\invumb\right) = \frac{1}{1-\left(\frac{1-\sqrt{1-4t}}{2}\right)}=\frac{2}{1+\sqrt{1-4t}}.
\end{equation*}
Note that $\frac{2}{1+\sqrt{1-4t}}$ is the generating function of the Catalan numbers. Accordingly, we denote by $C_i^{\sst(m)}$ the generalized $i$-th Catalan number of order $m$ given by 
\begin{equation*}
C_i^{\sst(m)} = \Big[t^i\Big] \left(f_{\bar{\unity}}\left(\,\,\left(\frac{t}{f_{\bar{\unity}}(t)}\right)^\invumb\right)\right)^m = \Big[t^i\Big] \left(\frac{2}{1+\sqrt{1-4t}}\right)^m\,\,,
\end{equation*}
being $C_i^{\sst(1)}$ the classical $i$-th Catalan number whose explicit formula is $C_i=\frac{1}{i+1}\binom{2i}{i}$. By setting $a_i^{\sst(m)} = i!\,C_i^{\sst(m)}$ and substituting the corresponding values in equation \eqref{eq:recdiff-classical}, we get the formula
\begin{equation*}
\binom{n}{k} = \sum_{i=0}^{n-k} C_i^{\sst(m)}\,\binom{n-m-2i}{k-m-i}\,,
\end{equation*}
whenever $2k-n\ge m$.
\end{Example}
\noindent\textbf{Acknowledgments}

\medskip

We would like to thank the anonymous referee for the valuable suggestions and comments.
\bibliographystyle{abbrv}
\bibliography{bibarray2}

\end{document}